\documentclass{amsart}
\usepackage{amssymb}
\usepackage{graphicx}

\theoremstyle{plain}
\newtheorem{theorem}{Theorem}[section]
\newtheorem{corollary}[theorem]{Corollary}
\newtheorem{definition}[theorem]{Definition}
\newtheorem{proposition}[theorem]{Proposition}

\numberwithin{equation}{section}

\begin{document}

\title[Colored base-3 partitions]{Colored base-3 partitions, sequences of polynomials, and perfect numbers}

\author{Karl Dilcher}
\address{Department of Mathematics and Statistics\\
         Dalhousie University\\
         Halifax, Nova Scotia, B3H 4R2, Canada}
\email{dilcher@mathstat.dal.ca}
\author{Larry Ericksen}
\address{1212 Forest Drive, Millville, NJ 08332-2512, USA}
\email{LE22@cornell.edu}
\keywords{Base-3 partitions, colored partitions, restricted partitions, 
recurrence relations, Chebyshev polynomials, zeros of polynomials}
\subjclass[2010]{Primary 11P81; Secondary 11B37, 11B83}
\thanks{Research supported in part by the Natural Sciences and Engineering
        Research Council of Canada, Grant \# 145628481}

\date{}

\setcounter{equation}{0}

\begin{abstract}
Motivated by the observation that the counting function of a certain base-3
colored partition contains the even perfect numbers as a subsequence, we
begin by defining a sequence of polynomials in four variables and discuss their
properties and combinatorial interpretations. We then concentrate on certain
subsequences that are related to the Chebyshev polynomials of both kinds.
Finally, we consider several sequences of single-variable polynomials that have
meaningful combinatorial interpretations as well as interesting zero 
distributions.
 
\end{abstract}

\maketitle

\section{Introduction}\label{sec:1}

There are countless different ways of writing a positive integer as a sum of
other positive integers according to a fixed set of rules. The most famous 
among these are the classical partitions; they go back to Euler and are still 
an active area of research. Variants include compositions, overpartitions,
colored partitions, partitions with restrictions of various kinds, and binary
or base-$b$ analogues of all of the above. 

Let us begin with the following base-3 partition rule and the corresponding
counting function. Given an integer $n\geq 1$, in how many ways can we write
$n$ as a sum of powers of 3 under the following conditions?
\begin{equation}\label{1.1}
\begin{cases}
&\hbox{At most one of each power may be overlined},\\
&\hbox{at most one of each power may be marked by a tilde, and}\\
&\hbox{at most two of each power may remain unmarked.}
\end{cases}
\end{equation}
We also assume that at most one of an overline or a tilde are allowed, and that
$3^0=1$ is also an allowable power of 3. 

Such partitions are usually referred
to as restricted colored (base-$b$) partitions, where in this case the three
``colors" are: (1) ``marked by a line", (2) ``marked by a tilde", and 
(3) ``unmarked". Given the condition \eqref{1.1}, we say that the restriction 
is of type $(1,1,2)$.
We denote the number of partitions as given in \eqref{1.1} by $S(n)$ and 
illustrate them with two examples.

\medskip
\noindent
{\bf Example~1.} Let $n=3$. Then the allowable partitions are
\[
3,\quad\overline{3},\quad\tilde{3},\quad 1+1+\overline{1},
\quad 1+1+\tilde{1},\quad 1+\overline{1}+\tilde{1},
\]
and we have $S(3)=6$.

\medskip
\noindent
{\bf Example~2.} Let $n=12$. To proceed systematically, we first consider the 
unmarked and unrestricted base-3 partitions, written more concisely as
\begin{equation}\label{1.2}
(93), (9111), (3333), (333111), (331\ldots 1), (31\ldots 1), (1\ldots 1).
\end{equation}
The last three of these partitions have too many 1s to satisfy the conditions
in \eqref{1.1}. However, the first four in \eqref{1.2} lead to the 
following restricted colored partitions:
\begin{align*}
&(93),(9\overline{3}),(9\tilde{3}),
(\overline{9}3),(\overline{9}\overline{3}),(\overline{9}\tilde{3}),
(\tilde{9}3),(\tilde{9}\overline{3}),(\tilde{9}\tilde{3});\\
&(911\overline{1}), (911\tilde{1}), (91\overline{1}\tilde{1}),\ldots,
(\tilde{9}1\overline{1}\tilde{1});\\
&(33\overline{3}\tilde{3});\\
&(33\overline{3}11\overline{1}), (33\overline{3}11\tilde{1}),\ldots,
(33\overline{3}1\overline{1}\tilde{1}),
\end{align*}
where in the second and fourth subsets we only showed a few representative 
ones. The numbers of partitions in these four blocks are 9,9,1 and 9, 
respectively, for a total of $S(12)=28$.

\medskip
We notice that the number of partitions, 6 and 28, in these two examples are
perfect numbers. Of course, this could be just a coincidence, but it turns out
that 496 and 8128 also occur in this way, for different $n$. One purpose of 
this paper is to
explain this observation. However, apart from this rather unexpected connection,
the even perfect numbers occur only as a subsequence of another well-known 
sequence, as we will see.

In the process of explaining this, we will study several general and more 
specific sequences of polynomials and their combinatorial interpretations.
In Section~\ref{sec:2} we introduce a sequence of polynomials in four variables,
which turns out to be a polynomial analogue of the sequence $S(n)$ defined by
\eqref{1.1}. We also derive several properties of these polynomials, 
including combinatorial interpretations in terms of restricted base-3 
partitions.

In Section~\ref{sec:3} we define two different but related subsequences of 
polynomials and derive several properties, including recurrences and 
connections with the Chebyshev polynomials of both kinds. We also explain the
connection with perfect numbers. In Sections~\ref{sec:4} and~\ref{sec:5} we
consider several sequences of single-variable polynomials that arise from 
specializing or equating certain of the four variables in the general case.
These polynomial sequences have interesting zero distributions and specific
combinatorial interpretations. We conclude this paper with some further 
remarks in Section~\ref{sec:6}.

\section{Some generalities}\label{sec:2}

The concept of colored (or multicolor) partitions is relatively recent; it was
introduced by Keith \cite{Ke} as a generalization of overpartitions. Colored 
$b$-ary (or base-$b$) partitions were more recently studied by Ulas and 
{\.Z}mija \cite{UZ}. In \cite{DE10} we developed a general theory of restricted
multicolor $b$-ary partitions, along with polynomial analogues and 
characterizations of individual partitions. A more complete list of references
can also be found in \cite{DE10}.

Here, we are going to consider the following special case. We begin with a 
definition.

\begin{definition}
{\rm Let $Z:=(w,x,y,z)$ be a quadruple of real or complex variables. We define 
the sequence $S(n;Z)$ of polynomials by the generating function}
\begin{equation}\label{2.1}
\sum_{n=0}^{\infty}S(n;Z)q^n = \prod_{j=0}^{\infty}\left(1+wq^{3^j}\right)
\left(1+xq^{3^j}\right)\left(1+yq^{3^j}+zq^{2\cdot 3^j}\right).
\end{equation}
\end{definition}

The first few of these polynomials are shown in Table~1.

\bigskip
\begin{center}
{\renewcommand{\arraystretch}{1.1}
\begin{tabular}{|r|l|}
\hline
$n$ & $S(n;Z)$ \\
\hline
0 & 1 \\
1 & $w+x+y$ \\
2 & $wx+wy+xy+z$ \\
3 & $wxy+wz+xz+w+x+y$ \\
4 & $w^2+2wx+2wy+wxz+x^2+2xy+y^2$  \\
5 & $w^2x+w^2y+wx^2+3wxy+wy^2+wz+x^2y+xy^2+xz+yz$ \\
6 & $w^2xy+w^2z+wx^2y+wx+wxy^2+2wxz+wy+wyz$ \\
& $\qquad+x^2z+xyz+xy+z$ \\
\hline
\end{tabular}}

\medskip
{\bf Table~1}: $S(n;Z)$ for $0\leq n\leq 6$.
\end{center}

\bigskip
When $Z=Z_0:=(1,1,1,1)$, we set $S(n):=S(n;Z_0)$, and we compute
\begin{equation}\label{2.2}
\big(S(n)\big)_{n\geq 0}
=(1,3,4,{\bf 6},10,12,13,15,16,18,22,24,{\bf 28},36,40,42,\ldots).
\end{equation}
By expanding the right-hand side of \eqref{2.1} with $w=x=y=z=1$, it is not
difficult to see the following interpretation of the sequence $S(n)$.

\begin{proposition}\label{prop:2.2}
For $n\geq 1$, $S(n)$ is exactly the number of base-3 partitions of $n$ under 
the conditions \eqref{1.1}. 
\end{proposition}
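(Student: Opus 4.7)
The plan is to expand the generating function \eqref{2.1} with $w=x=y=z=1$ as a formal power series in $q$ and to interpret each monomial contributing to the coefficient of $q^n$ as exactly one partition of $n$ satisfying \eqref{1.1}.

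For each fixed $j\geq 0$, the triple of factors $(1+q^{3^j})(1+q^{3^j})(1+q^{3^j}+q^{2\cdot 3^j})$ is read as encoding the allowable multiplicities of the power $3^j$. Selecting either $1$ or $q^{3^j}$ from the first factor corresponds to including no, or exactly one, overlined copy of $3^j$; the analogous choice in the second factor governs tilde-marked copies; and selecting one of $1$, $q^{3^j}$, $q^{2\cdot 3^j}$ from the third factor corresponds to including $0$, $1$, or $2$ unmarked copies of $3^j$. The three choices are made independently, which automatically enforces the rule that the overlined, tilde-marked, and unmarked contributions at a given power are distinct objects (so a single copy of $3^j$ cannot simultaneously carry both an overline and a tilde). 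Since every factor has the form $1+O(q^{3^j})$, only finitely many $j$ contribute nontrivially to the coefficient of any given $q^n$, so the product is a legitimate element of $\mathbb{Z}[[q]]$.

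Expanding and collecting, the coefficient of $q^n$ is exactly the number of ways to assign, for each $j$, an overlined multiplicity in $\{0,1\}$, a tilde-marked multiplicity in $\{0,1\}$, and an unmarked multiplicity in $\{0,1,2\}$ whose weighted sum in powers of $3$ equals $n$. This is precisely the count described by \eqref{1.1}. The only real issue is a matter of interpretation rather than any analytic obstacle: one must verify that a partition like $1+\overline{1}+\tilde{1}$ in Example~1 is accounted for exactly once, which follows because the three independent choices for each power $3^j$ are in bijection with the admissible data at that power. Once this bijection between expansion terms and partitions is set up, the proposition follows by comparing coefficients on both sides of \eqref{2.1}.
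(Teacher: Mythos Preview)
Your proof is correct and follows exactly the approach the paper indicates: it merely states that ``by expanding the right-hand side of \eqref{2.1} with $w=x=y=z=1$, it is not difficult to see'' the interpretation, and you have simply written out that expansion argument in detail. Your bijection between selection choices in each triple of factors and admissible multiplicities of each power of $3$ is precisely what the paper leaves implicit.
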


We now see that the sequence \eqref{2.2} is consistent with
Examples~1 and~2; the corresponding values have been highlighted.

If we now expand the right-hand side of \eqref{2.1} with variables $w,x,y,z$ 
in place, it is not difficult to see that we have the following combinatorial
interpretation of the coefficients of the polynomials $S(n;Z)$.

\begin{proposition}\label{prop:2.3}
If we write $S(n;Z)$ in the form
\begin{equation}\label{2.3}
S(n;Z)=\sum_{i,j,k,\ell\geq 0}c(i,j,k,\ell)\,w^ix^jy^kz^\ell,
\end{equation}
then among all colored base-3 partitions of $n$, restricted by \eqref{1.1},
the coefficient $c(i,j,k,\ell)$ counts those that have
\begin{enumerate}
\item[] $i$ parts marked with an overline, and
\item[] $j$ parts marked with a tilde, and
\item[] $k$ single unmarked parts, and
\item[] $\ell$ pairs of unmarked parts.
\end{enumerate}
\end{proposition}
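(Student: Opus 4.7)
The plan is to prove Proposition~\ref{prop:2.3} by expanding the generating function \eqref{2.1} directly and reading off what each monomial in the expansion encodes combinatorially. Concretely, I would write the product as
\[
\prod_{j=0}^{\infty}\Bigl(1+wq^{3^j}\Bigr)\Bigl(1+xq^{3^j}\Bigr)\Bigl(1+yq^{3^j}+zq^{2\cdot 3^j}\Bigr),
\]
and observe that choosing one term from each factor produces a monomial in $w,x,y,z$ times a power of $q$. The coefficient of $q^n$ is thus a sum over all choices of terms, one from each factor, whose $q$-exponents add to $n$.

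Next I would interpret each individual choice as a decision about the contribution of the power $3^j$ to a partition of $n$. From the factor $(1+wq^{3^j})$, selecting $wq^{3^j}$ contributes exactly one overlined copy of $3^j$, and the factor of $w$ records this choice; selecting $1$ contributes nothing. Similarly, the factor $(1+xq^{3^j})$ records the presence or absence of a single tilde-marked copy of $3^j$. The third factor $(1+yq^{3^j}+zq^{2\cdot 3^j})$ encodes the unmarked copies of $3^j$: selecting $1$ gives none, selecting $yq^{3^j}$ gives a single unmarked $3^j$ (hence a $y$), and selecting $zq^{2\cdot 3^j}$ gives two unmarked copies of $3^j$, i.e.\ a pair, marked by $z$. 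These three options implement exactly the restriction \eqref{1.1}: at most one overlined, at most one tilde-marked, and at most two unmarked copies, of each power.

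Combining all choices over $j$, a single term contributing to the coefficient of $q^n$ corresponds bijectively to a restricted colored base-3 partition of $n$ of the type described. Its accompanying monomial is $w^ix^jy^kz^\ell$, where $i$ is the total number of overlined parts chosen, $j$ the total number of tilde-marked parts, $k$ the total number of unmarked parts that appear singly, and $\ell$ the total number of pairs of unmarked parts. Summing over all valid selections yields \eqref{2.3} with $c(i,j,k,\ell)$ equal to the stated count.

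The only mild subtlety is the interpretation of $z$: it marks a \emph{pair} of identical unmarked parts rather than an individual unmarked part, which is why the exponent $\ell$ counts pairs and not parts. Once this bookkeeping is settled, no real obstacle remains; the argument is a direct unpacking of the generating function, exactly parallel to the proof of Proposition~\ref{prop:2.2} but keeping track of the four types of factors separately rather than setting $w=x=y=z=1$.
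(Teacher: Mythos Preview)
Your argument is correct and is exactly the approach the paper takes: the paper simply remarks that expanding the right-hand side of \eqref{2.1} with the variables $w,x,y,z$ in place makes the combinatorial interpretation evident, and your proposal carries out precisely this expansion and bookkeeping.
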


It is easy to see with Table~1 for $n=3$ that this is consistent with Example~1.
The case $n=12$ gives the following more meaningful example.

\medskip
\noindent
{\bf Example 3.} Computations show that one of the monomials of $S(12;Z)$ is
$2w^2xyz$. This corresponds to the two partitions
$(3\overline{3}\tilde{3}11\overline{1})$ and
$(33\overline{3}1\overline{1}\tilde{1})$ from Example~2, consistent with
Proposition~\ref{prop:2.3}.

Another monomial of $S(12;Z)$ is $3wxz$, which corresponds to the three
partitions $(\overline{9}11\tilde{1})$, $(\tilde{9}11\overline{1})$, and
$(33\overline{3}\tilde{3})$ from Example~2, again consistent with 
Proposition~\ref{prop:2.3}.

\medskip
Next we state a set of recurrence relations for the polynomials $S(n;Z)$. They
follow directly from Theorem~4.3 in \cite{DE10}.

\begin{proposition}\label{prop:2.4}
The polynomials $S(n;Z)$ satisfy the recurrence $S(0;Z)=1$, $S(1;Z)=w+x+y$, 
$S(2,Z)=wx+wy+xy+z$, and for $n\geq 1$,
\begin{align}
S(3n;Z)&=S(n;Z)+(wxy+wz+xz)\cdot S(n-1;Z),\label{2.4}\\
S(3n+1;Z)&=(w+x+y)\cdot S(n;Z)+wxz\cdot S(n-1;Z),\label{2.5}\\
S(3n+2;Z)&=(wx+wy+xy+z)\cdot S(n;Z).\label{2.6}
\end{align}
\end{proposition}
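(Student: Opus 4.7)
The plan is to prove all three recurrences in one stroke by exploiting a functional equation for the generating function. Writing $F(q):=\sum_{n\ge 0}S(n;Z)q^n$, I would isolate the $j=0$ factor in the product \eqref{2.1} and note that the remaining product is precisely $F(q^3)$, since substituting $q\mapsto q^3$ shifts the index $j$ by one. This gives the basic identity
\begin{equation*}
F(q) \;=\; (1+wq)(1+xq)(1+yq+zq^2)\,F(q^3).
\end{equation*}

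Next I would expand the cubic-in-$q$ prefactor once and for all. A routine multiplication yields
\begin{equation*}
B(q):=(1+wq)(1+xq)(1+yq+zq^2) = 1 + a_1 q + a_2 q^2 + a_3 q^3 + a_4 q^4,
\end{equation*}
with $a_1=w+x+y$, $a_2=wx+wy+xy+z$, $a_3=wxy+wz+xz$, and $a_4=wxz$. One should observe that these are exactly the coefficients appearing in Proposition~\ref{prop:2.4}; this is the small but crucial bookkeeping step that makes the recurrences fall out cleanly.

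Now I would extract coefficients from $F(q)=B(q)F(q^3)$. Since $F(q^3)=\sum_k S(k;Z)q^{3k}$ contributes only monomials of degrees divisible by $3$, the coefficient of $q^{3n+r}$ in $F(q)$ is $\sum_{j\equiv r\,(3)} a_j\,S(\tfrac{3n+r-j}{3};Z)$, where $j$ ranges over $\{0,1,2,3,4\}$ and $a_0:=1$. Splitting by residue class $r\in\{0,1,2\}$ gives precisely \eqref{2.4}, \eqref{2.5}, and \eqref{2.6}, respectively (using the convention $S(-1;Z)=0$ when $n=0$, and noting that for $r=2$ only $j=2$ lies in $\{0,\dots,4\}$, which is why \eqref{2.6} has no second term). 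The initial data $S(0;Z)$, $S(1;Z)$, $S(2;Z)$ are read off directly from $B(q)\cdot 1$, matching the values stated in the proposition.

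The argument is essentially mechanical once the factorization $F(q)=B(q)F(q^3)$ is in place, so there is no serious obstacle. The only point that requires care is the index-shift used to identify the tail product with $F(q^3)$, together with the fact that $B(q)$ has degree $4<6$, so that each residue class modulo $3$ picks up at most two terms of $B$. This last observation is what forces the asymmetry between \eqref{2.4}--\eqref{2.5} and \eqref{2.6}, and it is worth flagging explicitly in the write-up.
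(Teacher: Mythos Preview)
Your argument is correct. The functional equation $F(q)=B(q)F(q^3)$ is exactly the right observation, the expansion of $B(q)$ is accurate, and the residue-class extraction yields \eqref{2.4}--\eqref{2.6} as claimed; the remark about $\deg B=4<6$ cleanly explains why \eqref{2.6} has only one term.

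As for comparison with the paper: the paper does not actually prove Proposition~\ref{prop:2.4} in the text but simply states that it ``follow[s] directly from Theorem~4.3 in \cite{DE10}''. Your write-up therefore supplies a self-contained proof where the paper defers to an external reference. The method you use---isolating the bottom factor of a Mahler-type product to obtain $F(q)=B(q)F(q^3)$ and then reading off coefficients modulo~$3$---is the standard device behind such recurrences and is almost certainly what underlies the cited theorem in the general $b$-ary, multicolor setting. The advantage of your approach here is transparency and independence from \cite{DE10}; the advantage of the citation is brevity and the indication that the result is a special case of a broader framework. Either is acceptable; yours is arguably preferable for a reader who does not have \cite{DE10} at hand.
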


We finish this section with an easy consequence of Proposition~\ref{prop:2.4}.
We note that the sequence $s_k(n):=k\cdot3^n-1$, for a fixed $k\geq 1$,
satisfies the recurrence $s_k(n+1)=3\cdot s_k(n)+2$ with initial conditions 
$s_k(0)=k-1$. Therefore, upon iterating \eqref{2.6} and noting that the 
coefficient on the right of \eqref{2.6} is $S(2;Z)$, we get the following 
explicit formulas.

\begin{corollary}\label{cor:2.5}
For any integers $k\geq 1$ and $n\geq 0$, we have
\begin{equation}\label{2.7}
S(k\cdot 3^n-1,Z) = S(k-1;N)\cdot S(2;Z)^n,
\end{equation} 
and in particular
\begin{equation}\label{2.8}
S(3^n-1,Z) = (wx+wy+xy+z)^n.
\end{equation}
\end{corollary}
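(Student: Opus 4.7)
The plan is to prove both formulas by iterating the recurrence \eqref{2.6} and reading off the result at a carefully chosen index. The key observation, already flagged in the paragraph preceding the corollary, is that the sequence $s_k(n):=k\cdot 3^n-1$ is closed under the map $m\mapsto 3m+2$; indeed, one checks $3\cdot s_k(n)+2=3(k\cdot 3^n-1)+2=k\cdot 3^{n+1}-1=s_k(n+1)$, so $s_k(n+1)$ is exactly of the form $3m+2$ with $m=s_k(n)$.

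Proceeding formally, I would argue by induction on $n$. For $n=0$ the claim \eqref{2.7} reduces to $S(k-1;Z)=S(k-1;Z)\cdot S(2;Z)^0$, which is trivial. Assuming \eqref{2.7} holds for some $n\geq 0$, I apply \eqref{2.6} with the index $s_k(n)$ in place of the generic $n$ there, obtaining
\begin{equation*}
S\bigl(k\cdot 3^{n+1}-1;Z\bigr)=S\bigl(3\cdot s_k(n)+2;Z\bigr)=S(2;Z)\cdot S\bigl(s_k(n);Z\bigr),
\end{equation*}
and then substitute the inductive hypothesis $S(s_k(n);Z)=S(k-1;Z)\cdot S(2;Z)^n$ on the right to conclude the case $n+1$. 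This establishes \eqref{2.7}.

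For the special case \eqref{2.8}, I simply specialize \eqref{2.7} to $k=1$. Then $s_1(n)=3^n-1$, while $S(k-1;Z)=S(0;Z)=1$ from the initial conditions in Proposition~\ref{prop:2.4}, and $S(2;Z)=wx+wy+xy+z$ is exactly the coefficient appearing on the right-hand side of \eqref{2.6}; substituting these gives \eqref{2.8}.

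There is no real obstacle here: the only thing to watch carefully is that the arithmetic $3s_k(n)+2=s_k(n+1)$ lines up so that iterating \eqref{2.6} exactly $n$ times starting from index $s_k(0)=k-1$ lands on index $s_k(n)=k\cdot 3^n-1$, with each iteration contributing one factor of $S(2;Z)$. (I would also silently correct the misprint $S(k-1;N)$ in \eqref{2.7} to $S(k-1;Z)$.)
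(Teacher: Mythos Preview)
Your proposal is correct and matches the paper's own argument essentially verbatim: the paper likewise observes that $s_k(n+1)=3s_k(n)+2$ with $s_k(0)=k-1$ and then iterates \eqref{2.6}, picking up one factor of $S(2;Z)$ at each step. Your write-up simply makes the implicit induction explicit (and rightly flags the typo $S(k-1;N)$ for $S(k-1;Z)$).
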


Upon setting $w=x=y=z=1$ in \eqref{2.8}, we get as an immediate consequence 
the fact that the number of base-3 partitions of $3^n-1$, restricted by
\eqref{1.1}, is $4^n$. Analogous enumerations can be obtained from \eqref{2.7}.

Corollary~\ref{cor:2.5} is related to a more interesting variant which we will 
now consider in Section~\ref{sec:3}.

\section{Two different subsequences}\label{sec:3}

In analogy to the subsequences considered in Corollary~\ref{cor:2.5}, we now
define the two polynomial sequences
\begin{equation}\label{3.1}
Q_n(Z):=S(\tfrac{3^n-3}{2};Z),\qquad R_n(Z):=S(\tfrac{3^n-1}{2};Z),
\end{equation}
where as before $Z=(w,x,y,z)$. For greater clarity and ease of notation we now
set
\begin{align}
W_1(Z)&:=wxy+wz+xz+w+x+y,\label{3.2}\\
W_2(Z)&:=w^2xy+w^2z+wx^2y+wxy^2+wxz+wyz+x^2z+xyz.\label{3.3}
\end{align}
Most of the properties of the polynomial sequences $Q_n(Z)$ and $R_n(Z)$
depend on the following results.

\begin{proposition}\label{prop:3.1}
$($a$)$ The polynomials $Q_n(Z)$ satisfy $Q_0(Z)=0$, $Q_1(Z)=1$, and for 
$n\geq 2$,
\begin{equation}\label{3.4}
Q_n(Z)=W_1(Z)\cdot Q_{n-1}(Z) - W_2(Z)\cdot Q_{n-2}(Z).
\end{equation}
$($b$)$ The polynomials $R_n(Z)$ satisfy $R_0(Z)=1$, $R_1(Z)=w+x+y$, and 
for $n\geq 2$,
\begin{equation}\label{3.5}
R_n(Z)=W_1(Z)\cdot R_{n-1}(Z) - W_2(Z)\cdot R_{n-2}(Z).
\end{equation}
\end{proposition}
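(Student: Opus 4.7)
The plan is to derive both recurrences simultaneously by observing that $Q_n$ and $R_n$ are tied together by a coupled two-dimensional linear recurrence, and then invoking Cayley--Hamilton (or equivalently, eliminating one sequence).

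First I would reduce the indexing. Note that
\[
\tfrac{3^n-3}{2}=3\cdot\tfrac{3^{n-1}-1}{2},\qquad
\tfrac{3^n-1}{2}=3\cdot\tfrac{3^{n-1}-1}{2}+1.
\]
Applying \eqref{2.4} with $m=\frac{3^{n-1}-1}{2}$ gives
\[
Q_n(Z)=R_{n-1}(Z)+(wxy+wz+xz)\,Q_{n-1}(Z),
\]
since $m-1=\frac{3^{n-1}-3}{2}$. Similarly, applying \eqref{2.5} with the same $m$ yields
\[
R_n(Z)=(w+x+y)\,R_{n-1}(Z)+wxz\cdot Q_{n-1}(Z).
\]
These two identities, valid for $n\geq 1$, can be written as
\[
\begin{pmatrix}R_n(Z)\\Q_n(Z)\end{pmatrix}
=M\begin{pmatrix}R_{n-1}(Z)\\Q_{n-1}(Z)\end{pmatrix},
\qquad
M=\begin{pmatrix}w+x+y & wxz\\ 1 & wxy+wz+xz\end{pmatrix}.
\]

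Next I would compute the trace and determinant of $M$. The trace is immediately
\[
\operatorname{tr}(M)=(w+x+y)+(wxy+wz+xz)=W_1(Z),
\]
matching \eqref{3.2}. For the determinant, I would expand $(w+x+y)(wxy+wz+xz)$ and subtract $wxz$; the key cancellation is that one of the two $wxz$ terms produced by the expansion is eaten, leaving exactly
\[
\det(M)=w^2xy+w^2z+wx^2y+wxy^2+wxz+wyz+x^2z+xyz=W_2(Z),
\]
matching \eqref{3.3}. This is the only genuine computation in the proof; the main (mild) obstacle is just keeping the eight monomials of the product organized so that the cancellation is transparent.

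With the characteristic polynomial of $M$ identified as $\lambda^2-W_1(Z)\lambda+W_2(Z)$, Cayley--Hamilton gives $M^2=W_1(Z)M-W_2(Z)I$, and multiplying the vector recurrence on the left shows that \emph{both} coordinates $R_n(Z)$ and $Q_n(Z)$ satisfy the scalar recurrence
\[
f_n=W_1(Z)f_{n-1}-W_2(Z)f_{n-2}\qquad(n\geq 2).
\]
Finally I would verify the initial conditions: $Q_0(Z)=S(-1;Z)=0$ and $Q_1(Z)=S(0;Z)=1$ from the convention $S(-1;Z)=0$ implicit in \eqref{2.1}, while $R_0(Z)=S(0;Z)=1$ and $R_1(Z)=S(1;Z)=w+x+y$ from Table~1. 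As a sanity check one can recompute $Q_2(Z)=W_1(Z)$ and $R_2(Z)=S(4;Z)$ from the recurrences and compare with Table~1, which confirms the formulas.
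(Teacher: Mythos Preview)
Your argument is correct and takes a genuinely different route from the paper's. The paper proves \eqref{3.4} and \eqref{3.5} by a joint induction on $n$: it expands $Q_n-W_1Q_{n-1}+W_2Q_{n-2}$ (and similarly for $R$) using \eqref{2.4} and \eqref{2.5}, and then applies the induction hypothesis. Your approach instead first extracts the coupled first-order system
\[
Q_n=R_{n-1}+(wxy+wz+xz)Q_{n-1},\qquad R_n=(w+x+y)R_{n-1}+wxz\,Q_{n-1},
\]
packages it as a $2\times 2$ transfer matrix $M$, and reads off $W_1=\operatorname{tr}M$ and $W_2=\det M$ before invoking Cayley--Hamilton. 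This is more structural: it explains \emph{why} $W_1$ and $W_2$ are the right coefficients, and it incidentally proves what the paper later states separately as Proposition~\ref{prop:6.1} (your coupled system is exactly \eqref{6.1}--\eqref{6.2} shifted by one). The paper's induction is slightly more elementary in that it avoids linear algebra, but it obscures the trace/determinant interpretation.

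One small point to tighten: Proposition~\ref{prop:2.4} states \eqref{2.4}--\eqref{2.5} only for $n\geq 1$, so your derivation of the coupled system via $m=\tfrac{3^{n-1}-1}{2}$ literally applies only for $n\geq 2$ (where $m\geq 1$). The case $n=1$ of the matrix recurrence, which you need in order to get the scalar recurrence at $n=2$, should be checked directly from the initial values $Q_0=0$, $Q_1=1$, $R_0=1$, $R_1=w+x+y$; this is immediate, but worth saying explicitly rather than asserting validity ``for $n\geq 1$'' as a consequence of \eqref{2.4}--\eqref{2.5}.
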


\begin{proof}
We prove both identities jointly by induction on $n$. For greater ease of 
notation we suppress the multi-variable $Z$. We first verify the values
for $Q_1$, $R_0$ and $R_1$ with \eqref{3.1} and Table~1. We also note
that $Q_2=S(3)$, $R_2=S(4)$, and by direct computation using Table~1 we can
then verify that \eqref{3.4} and \eqref{3.5} holds for $n=2$. This is the 
induction beginning. 

We now assume that both \eqref{3.4} and \eqref{3.5} hold up to some $n-1$.
We wish to show that they then also hold for $n$, that is, as written in
\eqref{3.4} and \eqref{3.5}. Using the easy identities
\begin{equation}\label{3.6}
\frac{3^{n-j}-3}{2}=3\cdot\frac{3^{n-j-1}-1}{2},\qquad
\frac{3^{n-j-1}-1}{2}-1=\frac{3^{n-j-1}-3}{2}
\end{equation}
for $j=0,1,2$, we get with \eqref{2.4} that
\begin{align*}
Q_n-&W_1Q_{n-1}+W_2Q_{n-2}\\
&= S(\tfrac{3^n-3}{2})-W_1S(\tfrac{3^{n-1}-3}{2})+W_2S(\tfrac{3^{n-2}-3}{2})\\
&=S(\tfrac{3^{n-1}-1}{2})-W_1S(\tfrac{3^{n-2}-1}{2})+W_2S(\tfrac{3^{n-3}-1}{2})\\
&\quad+(wxy+wz+xz)\left(S(\tfrac{3^{n-1}-3}{2})-W_1S(\tfrac{3^{n-2}-3}{2})
+W_2S(\tfrac{3^{n-3}-3}{2})\right).
\end{align*}
Applying the two identities in \eqref{3.1} to the last two lines, we see by the
induction hypothesis that both lines vanish, which proves \eqref{3.4}.

Similarly, using \eqref{3.6} again repeatedly, we get with \eqref{2.5} that
\begin{align*}
R_n-&W_1R_{n-1}+W_2R_{n-2}\\
&= S(\tfrac{3^n-3}{2}+1)-W_1S(\tfrac{3^{n-1}-3}{2}+1)
+W_2S(\tfrac{3^{n-2}-3}{2}+1)\\
&=(w+x+y)\left(S(\tfrac{3^{n-1}-1}{2})-W_1S(\tfrac{3^{n-2}-1}{2})
+W_2S(\tfrac{3^{n-3}-1}{2})\right)\\
&\quad+wxz\left(S(\tfrac{3^{n-1}-1}{2}-1)-W_1S(\tfrac{3^{n-2}-1}{2}-1)
+W_2S(\tfrac{3^{n-3}-1}{2}-1)\right).
\end{align*}
Once again we apply the two identities in \eqref{3.1} to the last two lines and
see that both expressions in large parentheses vanish by the induction 
hypothesis. This proves \eqref{3.5} and the proof by induction is complete.
\end{proof}

Before we continue with the general case, we specialize $Z=Z_0:=(1,1,1,1)$ and
set $q_n:=Q_n(Z_0)$ and $r_n:=R_n(Z_0)$. Then Proposition~\ref{prop:3.1}
simplifies to
\begin{align}
q_0&=0, q_1=1,\quad\hbox{and}\quad q_n=6q_{n-1}-8q_{n-2}
\quad(n\geq 2),\label{3.7}\\
r_0&=1, r_1=3,\quad\hbox{and}\quad r_n=6r_{n-1}-8r_{n-2}
\quad(n\geq 2).\label{3.8}
\end{align}
The characteristic polynomial for the two recurrence relations is 
$x^2-6x+8=(x-4)(x-2)$, and it is not difficult to verify that the Binet-type
formulas are, for $n\geq 0$,
\begin{align}
q_n&=\frac{1}{2}\left(4^n-2^n\right) = 2^{n-1}\left(2^n-1\right),\label{3.9}\\
r_n&=\frac{1}{2}\left(4^n+2^n\right) = 2^{n-1}\left(2^n+1\right).\label{3.10}
\end{align}

With \eqref{3.9} and using the definition of $q_n$ and \eqref{3.1}, 
Proposition~\ref{prop:2.2} now implies that our observation following 
Example~2 is indeed true in general.

\begin{corollary}\label{cor:3.2}
For any $n\geq 1$, the number of base-3 partitions of $\frac{1}{2}(3^n-3)$
under the condition \eqref{1.1} is $2^{n-1}(2^n-1)$. In particular, if $n$
is a prime number such that $2^n-1$ is also prime, then the number of such
partitions is a perfect number. Conversely, all even perfect numbers can be
expressed in this way.
\end{corollary}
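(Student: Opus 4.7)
The plan is to simply splice together three ingredients already in place: the combinatorial interpretation of $S(n)$, the identification of $q_n$ with a specific value of $S$, and the closed form for $q_n$. Then the number-theoretic tail is the Euclid--Euler theorem on even perfect numbers.

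More concretely, I would start by invoking Proposition~\ref{prop:2.2}, which says that $S(m)$ counts base-$3$ partitions of $m$ under condition \eqref{1.1}. Next, from the definition \eqref{3.1} with $Z=Z_0$ we have $q_n = Q_n(Z_0) = S(\tfrac{3^n-3}{2})$, so $S(\tfrac{3^n-3}{2})$ is precisely the quantity we want to compute. Finally, the Binet-type formula \eqref{3.9} gives $q_n = 2^{n-1}(2^n-1)$, establishing the first sentence of the corollary.

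For the second sentence, I would quote Euclid's classical result: if $n$ is such that $2^n-1$ is prime, then $2^{n-1}(2^n-1)$ is a perfect number (and it is standard that $2^n-1$ prime forces $n$ prime). For the converse, I would cite the Euler half of the Euclid--Euler theorem: every even perfect number has the form $2^{p-1}(2^p-1)$ with $2^p-1$ a Mersenne prime. Combining this with the formula $q_n = 2^{n-1}(2^n-1)$ shows that every even perfect number appears as some $q_n$, hence as the partition count for $m = \tfrac{3^n-3}{2}$.

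There is essentially no obstacle here: all the real work has been done in Propositions~\ref{prop:2.2} and~\ref{prop:3.1} and the derivation of \eqref{3.9}. The only non-trivial external input is the Euclid--Euler classification of even perfect numbers, which I would simply cite rather than reprove. The proof should be only a few lines long.
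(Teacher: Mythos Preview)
Your proposal is correct and follows essentially the same route as the paper: the paper derives the first statement by combining \eqref{3.9}, the definition of $q_n$ via \eqref{3.1}, and Proposition~\ref{prop:2.2}, and then attributes the second and third statements to the Euclid--Euler characterization of even perfect numbers. There is nothing to add.
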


The second and third statements come from the well-known characterization of
even perfect numbers due to Euclid and Euler. If $n$ is a prime, $2^n-1$ is 
called a Mersenne number, which may or may not itself be prime. It is not
known whether there are infinitely many Mersenne primes.

In analogy to Corollary~\ref{cor:3.2}, the numbers $r_n$ lead to the following
consequence of Proposition~\ref{prop:2.2}.

\begin{corollary}\label{cor:3.3}
For any $n\geq 1$, the number of base-3 partitions of $\frac{1}{2}(3^n-1)$
under the condition \eqref{1.1} is $2^{n-1}(2^n+1)$.
\end{corollary}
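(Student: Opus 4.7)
The plan is to follow the same three-step chain used in Corollary~\ref{cor:3.2}, but with $R_n$ in place of $Q_n$ throughout. The entire content of the statement is already packaged in results proved earlier in the excerpt: Proposition~\ref{prop:2.2} gives the combinatorial interpretation of the integer sequence $S(m)$, the definition \eqref{3.1} ties $R_n(Z_0)$ to $S(\tfrac{3^n-1}{2})$, and the closed form \eqref{3.10} evaluates $r_n$.

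Concretely, I would first set $m := \tfrac{1}{2}(3^n-1)$ and apply Proposition~\ref{prop:2.2} to identify the number of base-3 partitions of $m$ satisfying \eqref{1.1} with $S(m)$. Next, by \eqref{3.1} and the specialization convention $r_n = R_n(Z_0)$ introduced just before \eqref{3.8}, this count equals $r_n$. Finally, \eqref{3.10} gives $r_n = 2^{n-1}(2^n+1)$, completing the argument by concatenating the three equalities.

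There is essentially no obstacle here: the genuine work has already been absorbed into Proposition~\ref{prop:3.1}, which produced the linear recurrence \eqref{3.8}, and into the routine solution of that recurrence via the characteristic polynomial $x^2-6x+8=(x-4)(x-2)$. The Binet formula \eqref{3.10} is verified in one line from the initial values $r_0=1$, $r_1=3$ and the standard root-splitting for a second-order linear recurrence with distinct roots, so nothing beyond citing the prior results in the correct order is needed. The corollary thus falls squarely into the "immediate consequence" category, strictly parallel to Corollary~\ref{cor:3.2}, and no separate inductive or combinatorial argument has to be developed.
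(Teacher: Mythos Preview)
Your proposal is correct and follows exactly the same route as the paper: invoke Proposition~\ref{prop:2.2} to identify the count with $S\bigl(\tfrac{3^n-1}{2}\bigr)$, recognize this as $r_n=R_n(Z_0)$ via \eqref{3.1}, and then apply the closed form \eqref{3.10}. The paper itself presents Corollary~\ref{cor:3.3} as an immediate consequence of Proposition~\ref{prop:2.2} in direct analogy to Corollary~\ref{cor:3.2}, so there is nothing to add.
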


We mention in passing that in the case $n=2^k$, $0\leq k\leq 4$, the Fermat
number $2^{2^k}+1$ is prime, but no other Fermat prime is known. It follows 
from a
famous result of Gauss that a regular polygon with $r_n=2^{n-1}(2^n+1)$ sides
is constructible with compass and straightedge if $n=2^k$, $0\leq k\leq 4$.

\medskip
Returning to the general polynomial case, we use Proposition~\ref{prop:3.1} to
derive generating functions for the polynomials $Q_n(Z)$ and $R_n(Z)$.

\begin{proposition}\label{prop:3.4}
The polynomials $Q_n(Z)$ and $R_n(Z)$ have the generating functions
\begin{align}
\sum_{n=0}^{\infty}Q_n(Z)q^n&=\frac{q}{1-W_1(Z)q+W_2(Z)q^2},\label{3.11}\\
\sum_{n=0}^{\infty}R_n(z)q^n
&=\frac{1-(wxy+wz+xz)q}{1-W_1(Z)q+W_2(Z)q^2}.\label{3.12}
\end{align}
\end{proposition}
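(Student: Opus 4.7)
The plan is to derive both generating functions directly from the linear recurrences in Proposition~\ref{prop:3.1}, which have the standard form $P_n = W_1 P_{n-1} - W_2 P_{n-2}$ with $n$-independent coefficients. This is a textbook situation: such a recurrence forces the generating function to be a rational function with denominator $1 - W_1(Z)q + W_2(Z)q^2$, and only the numerator depends on the initial values.

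Concretely, I would let $F(q) := \sum_{n\geq 0} Q_n(Z) q^n$ and $G(q) := \sum_{n\geq 0} R_n(Z) q^n$. For $F(q)$, multiply the recurrence \eqref{3.4} by $q^n$ and sum over $n\geq 2$, then re-index the two shifted sums so they become $W_1 q\bigl(F(q)-Q_0(Z)\bigr)$ and $W_2 q^2 F(q)$. Using the initial data $Q_0(Z)=0$ and $Q_1(Z)=1$, the left-hand side becomes $F(q)-q$, and solving for $F(q)$ yields \eqref{3.11} immediately. For $G(q)$ the same procedure, applied to \eqref{3.5} with $R_0(Z)=1$ and $R_1(Z)=w+x+y$, produces
\[
G(q)\bigl(1-W_1(Z)q+W_2(Z)q^2\bigr) = 1 + \bigl((w+x+y)-W_1(Z)\bigr)q.
\]

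The only substantive step—really the only place any thought is required—is recognizing that the coefficient of $q$ on the right simplifies correctly. From the definition \eqref{3.2} of $W_1(Z)$, we have $(w+x+y) - W_1(Z) = -(wxy+wz+xz)$, which turns the numerator into exactly $1-(wxy+wz+xz)q$, yielding \eqref{3.12}. I expect this to be the main (and essentially only) obstacle: correctly carrying out the $W_1$ cancellation in the $R_n$ case, since the $Q_n$ case is cleaner because $Q_0(Z)=0$ kills the analogous correction term entirely.
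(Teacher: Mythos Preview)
Your proposal is correct and essentially matches the paper's own proof: the paper multiplies the claimed generating function by the denominator $1-W_1(Z)q+W_2(Z)q^2$ and checks coefficient-by-coefficient using the recurrences \eqref{3.4} and \eqref{3.5}, which is exactly your summation argument run in the verification direction. The only simplification you identified (the cancellation $(w+x+y)-W_1(Z)=-(wxy+wz+xz)$ from \eqref{3.2}) is indeed the one nontrivial step, and it is correct.
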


\begin{proof}
We multiply both sides of \eqref{3.11} by the denominator on the right and
take the Cauchy product with the power series on the left. Then the constant
coefficient is $Q_0(Z)=0$, the coefficient of $q$ is $Q_1(Z)-W_1(Z)Q_0(z)=1$,
and all other coefficients are 0 by \eqref{3.4}. This shows that the numerator
on the right is $q$, which proves \eqref{3.11}. The identity \eqref{3.12} is 
proven analogously, using \eqref{3.5}.
\end{proof}

Propositions~\ref{prop:3.1} and~\ref{prop:3.4} indicate possible connections 
with the Chebyshev polynomials of the first and second kind, $T_n(v)$ and
$U_n(v)$, which can be defined by their generating functions

\begin{equation}\label{3.13}
\sum_{n=0}^{\infty}T_n(v)t^n = \frac{1-vt}{1-2vt+t^2},\qquad
\sum_{n=0}^{\infty}U_n(v)t^n = \frac{1}{1-2vt+t^2}.
\end{equation}
Numerous properties of these two polynomial sequences can be found, for 
instance, in \cite{Ri}. We can now state and prove the following identities,
which are similar in nature to those in Proposition~3.3 of \cite{DE11}.

\begin{proposition}\label{prop:3.5}
For all $n\geq 0$ we have
\begin{align}
Q_{n+1}(Z)&=W_2(Z)^{n/2}\cdot 
U_n\left(\frac{W_1(Z)}{2W_2(Z)^{1/2}}\right),\label{3.14}\\
R_n(Z)&=W_2(Z)^{n/2}\cdot 
T_n\left(\frac{W_1(Z)}{2W_2(Z)^{1/2}}\right)\label{3.15} \\
&\qquad\qquad\qquad +\frac{w+x+y-(wxy+wz+xz)}{2}Q_n(Z),\nonumber
\end{align}
with $W_1(Z)$ and $W_2(Z)$ as defined in \eqref{3.2} and \eqref{3.3}.
\end{proposition}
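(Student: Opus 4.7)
The plan is to match both identities by comparing generating functions, using Proposition~\ref{prop:3.4} on one side and the Chebyshev generating functions \eqref{3.13} on the other. The key observation is that under the substitution $t = W_2(Z)^{1/2}\,q$ and $v = W_1(Z)/(2W_2(Z)^{1/2})$, one has $2vt = W_1(Z)\,q$ and $t^2 = W_2(Z)\,q^2$, so the Chebyshev denominator $1 - 2vt + t^2$ becomes exactly the denominator $1 - W_1(Z)q + W_2(Z)q^2$ appearing in \eqref{3.11} and \eqref{3.12}.

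First I would establish \eqref{3.14}. Inserting the substitution into the $U_n$-generating function in \eqref{3.13} and then multiplying by $q$ gives
\[
q\sum_{n=0}^{\infty} W_2(Z)^{n/2}\,U_n\!\left(\frac{W_1(Z)}{2W_2(Z)^{1/2}}\right)q^n
= \frac{q}{1 - W_1(Z)\,q + W_2(Z)\,q^2},
\]
and the right-hand side equals $\sum_{n\geq0} Q_n(Z)q^n$ by \eqref{3.11}. Comparing coefficients of $q^{n+1}$ yields \eqref{3.14}.

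Next, for \eqref{3.15}, the same substitution in the $T_n$-generating function gives
\[
\sum_{n=0}^{\infty} W_2(Z)^{n/2}\,T_n\!\left(\frac{W_1(Z)}{2W_2(Z)^{1/2}}\right)q^n
= \frac{1 - \tfrac12 W_1(Z)\,q}{1 - W_1(Z)\,q + W_2(Z)\,q^2}.
\]
Subtracting this from \eqref{3.12} preserves the common denominator, and the numerator becomes $\bigl(\tfrac12 W_1(Z) - (wxy+wz+xz)\bigr)q$; by \eqref{3.2} this coefficient collapses to $\tfrac12\bigl(w+x+y - (wxy+wz+xz)\bigr)$, precisely the constant appearing in \eqref{3.15}. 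The remaining factor $q/(1 - W_1 q + W_2 q^2)$ is $\sum Q_n(Z)q^n$, and comparing coefficients of $q^n$ produces \eqref{3.15}.

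I do not anticipate a real obstacle. The only cosmetic subtlety is that the formulas involve $W_2^{n/2}$ and $W_2^{1/2}$, but since $U_n$ and $T_n$ are polynomials of degree $n$ whose monomials all have the same parity as $n$, the expressions $W_2^{n/2}\,U_n(W_1/(2W_2^{1/2}))$ and $W_2^{n/2}\,T_n(W_1/(2W_2^{1/2}))$ are honest polynomials in $W_1(Z)$ and $W_2(Z)$, so no branch of the square root needs to be chosen. The entire argument reduces to the one numerator simplification $\tfrac12 W_1(Z) - (wxy+wz+xz) = \tfrac12(w+x+y-(wxy+wz+xz))$.
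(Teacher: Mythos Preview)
Your proposal is correct and follows essentially the same route as the paper: both arguments substitute $t=W_2(Z)^{1/2}q$, $v=W_1(Z)/(2W_2(Z)^{1/2})$ so that the Chebyshev denominator matches that of \eqref{3.11}--\eqref{3.12}, obtain \eqref{3.14} by equating coefficients, and then isolate the extra $Q_n$-term in \eqref{3.15} via the numerator identity $\tfrac12 W_1(Z)-(wxy+wz+xz)=\tfrac12(w+x+y-(wxy+wz+xz))$. The only cosmetic difference is that the paper splits the numerator of \eqref{3.12} as $(1-vt)+\text{remainder}$ whereas you subtract the $T_n$-series from the $R_n$-series, which amounts to the same computation.
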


\begin{proof}
Comparing \eqref{3.11} with the second identity in \eqref{3.13}, we see that
\[
q=\frac{t}{W_2(Z)^{1/2}}\quad\hbox{and}\quad v=\frac{W_1(Z)}{2W_2(Z)^{1/2}}.
\]
Equating coefficients of $q^n$ then gives \eqref{3.14}.

Next, with $t$ and $v$ as above, we rewrite the numerator on the right of 
\eqref{3.12} as
\begin{equation}\label{3.16}
1-(wxy+wz+xz)q = 1-vt+\frac{w+x+y-wxy-wz-xz}{2W_2(Z)^{1/2}}t.
\end{equation}
The term $1-vt$, together with the first identity in \eqref{3.13}, leads to the
first summand on the right of \eqref{3.15}, while the remaining term on the 
right-hand side of \eqref{3.16} leads to the second summand on the right of 
\eqref{3.15}.
\end{proof}

The connections in Proposition~\ref{prop:3.5} between the polynomials $Q_n(Z)$,
$R_n(Z)$ and the Chebyshev polynomials will be particularly useful in the 
next section.

\section{Single-variable polynomials: The case $Z=(1,1,z,1)$}\label{sec:4}

There are numerous ways of specializing or equating the four variables in
$Z=(w,x,y,z)$ so that we obtain single-variable polynomials. In this and the
next section we concentrate on a few special cases which allow for reasonable 
combinatorial interpretations, while at the same time the polynomials in 
question have some interesting properties in their own right.

This section will be devoted to what we consider the most interesting of these
cases. For ease of notation we set $Z_1:=(1,1,z,1)$ and
\[
Q_n^{(1)}(z):=Q_n(Z_1),\qquad R_n^{(1)}(z):=R_n(Z_1).
\]
Then by \eqref{3.2} and
\eqref{3.3} we have
\begin{equation}\label{4.1}
W_1^{(1)}(z):=W_1(Z_1)=2(z+2),\qquad W_2^{(1)}(z):=W_2(Z_1)=(z+1)(z+2),
\end{equation}
and Proposition~\ref{prop:3.1}, with $R_1^{(1)}(z)=2+z$, allows us to 
efficiently compute the polynomials $Q_n^{(1)}(z)$ and $R_n^{(1)}(z)$; 
see Table~2.

Proposition~\ref{prop:2.3} together with \eqref{3.1} provides the following
combinatorial interpretations of the polynomials $Q_n^{(1)}(z)$ and 
$R_n^{(1)}(z)$.

\begin{proposition}\label{prop:4.1}
For $n\geq 1$, let
\begin{equation}\label{4.2}
Q_n^{(1)}(z) = \sum_{k=0}^{n-1}c_n(k)z^k,\qquad 
R_n^{(1)}(z) = \sum_{k=0}^{n}d_n(k)z^k.
\end{equation}
Then $c_n(k)$ counts the number of colored base-$3$ partitions of 
$\frac{1}{2}(3^n-3)$, restricted by \eqref{1.1} and having exactly $k$ single
unmarked powers of $3$.
Similarly, $d_n(k)$ counts the number of such partitions of 
$\frac{1}{2}(3^n-1)$.
\end{proposition}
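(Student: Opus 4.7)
The proof will be essentially a direct specialization of Proposition~\ref{prop:2.3}, with no new combinatorial content beyond that result. My plan is as follows.

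First I would write out what happens when we set $Z = Z_1 = (1,1,z,1)$ in the expansion \eqref{2.3}. Since $w \mapsto 1$, $x \mapsto 1$, and the fourth variable $\mapsto 1$, every factor $w^i x^j z^\ell$ collapses to $1$, while $y^k$ becomes $z^k$. Thus
\[
S(N;Z_1) \;=\; \sum_{k\geq 0}\Bigl(\sum_{i,j,\ell\geq 0} c(i,j,k,\ell)\Bigr) z^k,
\]
so the coefficient of $z^k$ in $S(N;Z_1)$ counts the colored base-$3$ partitions of $N$ restricted by \eqref{1.1} that have exactly $k$ single unmarked parts, with no restriction on the number of overlined parts, tilde parts, or pairs of unmarked parts.

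Next I would apply this to the two values $N = \tfrac{3^n-3}{2}$ and $N = \tfrac{3^n-1}{2}$. By the definition \eqref{3.1} these are $Q_n^{(1)}(z) = S(\tfrac{3^n-3}{2};Z_1)$ and $R_n^{(1)}(z) = S(\tfrac{3^n-1}{2};Z_1)$, so matching with \eqref{4.2} identifies $c_n(k)$ and $d_n(k)$ as the asserted counts.

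The only remaining point is the explicit upper limits $n-1$ and $n$ on the summations in \eqref{4.2}. This is not part of the combinatorial claim but of the statement of \eqref{4.2}; I would verify it by induction using the recurrences of Proposition~\ref{prop:3.1} specialized via \eqref{4.1}: since $W_1^{(1)}(z)$ has degree $1$ and $W_2^{(1)}(z)$ has degree $2$, a straightforward induction based on \eqref{3.4}, \eqref{3.5}, and the initial data $Q_0^{(1)}=0$, $Q_1^{(1)}=1$, $R_0^{(1)}=1$, $R_1^{(1)}=2+z$ shows $\deg Q_n^{(1)} = n-1$ and $\deg R_n^{(1)} = n$. There is no serious obstacle here; the main content of the proposition is already carried by Proposition~\ref{prop:2.3}, and the present proof is really a bookkeeping exercise in specializing that result along the subsequences picked out by \eqref{3.1}.
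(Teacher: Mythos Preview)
Your proposal is correct and follows exactly the approach the paper indicates: the paper does not give a separate proof of this proposition but simply states that ``Proposition~\ref{prop:2.3} together with \eqref{3.1} provides the following combinatorial interpretations,'' which is precisely the specialization-and-bookkeeping argument you spell out. Your additional verification of the degree bounds via the recurrences \eqref{3.4}--\eqref{3.5} with \eqref{4.1} is a welcome detail that the paper leaves implicit (it is later confirmed by Corollary~\ref{cor:4.3}).
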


\medskip
\begin{center}
{\renewcommand{\arraystretch}{1.1}
\begin{tabular}{|r|l|l|}
\hline
$n$ & $Q_n^{(1)}(z)$ & $R_n^{(1)}(z)$ \\
\hline
0 & 0 & 1 \\
1 & 1 & $z+2$ \\
2 & $2z+4$ & $z^2+4z+5$ \\
3 & $3z^2+12z+13$ & $z^3+6z^2+15z+14$ \\
4 & $4z^3+24z^2+52z+40$ & $z^4+8z^3+30z^2+56z+41$ \\
5 & $5z^4+40z^3+130z^2+200z+121$ & $z^5+10z^4+50z^3+140z^2+205z+122$ \\
\hline
\end{tabular}}

\medskip
{\bf Table~2}: $Q_n^{(1)}(z)$ and $R_n^{(1)}(z)$ for $0\leq n\leq 5$.
\end{center}
\bigskip

Table~2 shows some obvious patterns in the leading and constant coefficients
of the two polynomial sequences. Indeed, some straightforward inductions show
that
\begin{equation}\label{4.3}
c_n(0)=\frac{3^n-1}{2},\quad c_n(n-1)=n,\quad\hbox{and}\quad
d_n(0)=\frac{3^n+1}{2},\quad d_n(n)=1.
\end{equation}
We will see below that these are special cases of a more general result.

\medskip
Next, we derive some special properties of the
polynomials $Q_n^{(1)}(z)$ and $R_n^{(1)}(z)$. We first observe that with 
$w=x=z=1$ in \eqref{3.15}, the summand on the right of \eqref{3.15} vanishes, 
and with \eqref{4.1} we get
\begin{align}
Q_{n+1}^{(1)}(z)&=\big((z+1)(z+3)\big)^{n/2}\cdot
U_n\left(\frac{z+2}{\sqrt{(z+1)(z+3)}}\right),\label{4.4}\\
R_n^{(1)}(z)&=\big((z+1)(z+3)\big)^{n/2}\cdot
T_n\left(\frac{z+2}{\sqrt{(z+1)(z+3)}}\right).\label{4.5}
\end{align}

All further properties are consequences of these two identities. We begin with
the following rather surprising relations.

\begin{proposition}\label{prop:4.2}
For all $n\geq 0$,
\begin{align}
R_n^{(1)}(z)-Q_n^{(1)}(z) &= (z+1)^n,\label{4.6}\\
R_n^{(1)}(z)^2-Q_n^{(1)}(z)^2 &= \left((z+1)(z+3)\right)^n,\label{4.7}\\
R_n^{(1)}(z)+Q_n^{(1)}(z) &= (z+3)^n.\label{4.8}
\end{align}
\end{proposition}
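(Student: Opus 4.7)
The plan is to read all three identities off of the Chebyshev representations \eqref{4.4} and \eqref{4.5}, combined with the classical ``de Moivre'' identity
\[
\bigl(v\pm\sqrt{v^2-1}\bigr)^n = T_n(v)\pm\sqrt{v^2-1}\,U_{n-1}(v),
\]
which is the algebraic form of $e^{\pm in\theta}=\cos(n\theta)\pm i\sin(n\theta)$ under the substitution $v=\cos\theta$.

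First I would set $v:=(z+2)/\sqrt{(z+1)(z+3)}$ as in \eqref{4.4}--\eqref{4.5} and exploit the small miracle $(z+2)^2-(z+1)(z+3)=1$, which yields
\[
v^2-1=\frac{1}{(z+1)(z+3)},\qquad v\pm\sqrt{v^2-1}=\frac{(z+2)\pm 1}{\sqrt{(z+1)(z+3)}}.
\]
Substituting into the de Moivre identity and multiplying through by $((z+1)(z+3))^{n/2}$, the right-hand side becomes exactly $R_n^{(1)}(z)\pm Q_n^{(1)}(z)$: the $T_n$-term reproduces \eqref{4.5}, while $((z+1)(z+3))^{n/2}\cdot\sqrt{v^2-1}\,U_{n-1}(v)=((z+1)(z+3))^{(n-1)/2}U_{n-1}(v)=Q_n^{(1)}(z)$ by \eqref{4.4} with index shifted by one. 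The left-hand side collapses to $(z+3)^n$ for the plus sign and to $(z+1)^n$ for the minus sign, proving \eqref{4.8} and \eqref{4.6} simultaneously. The remaining identity \eqref{4.7} is then immediate as the product of these two, i.e., a difference of squares.

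The main technical point to be careful about is that $\sqrt{(z+1)(z+3)}$ is not in $\mathbb{Z}[z]$, so the argument lives in $\mathbb{Z}[z]\bigl[\sqrt{(z+1)(z+3)}\bigr]$; what guarantees that everything lands back in $\mathbb{Z}[z]$ at the end is the parity of Chebyshev polynomials ($T_n$ and $U_{n-1}$ contain only powers of $v$ of parity $n$), so that after multiplying by $((z+1)(z+3))^{n/2}$ all half-integer powers of $(z+1)(z+3)$ recombine into honest polynomials in $z$. Beyond this parity bookkeeping, the only thing to track is the index shift between $R_n^{(1)}$ and $Q_n^{(1)}$, which can be sanity-checked at $n=1,2$ against Table~2.
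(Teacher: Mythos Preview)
Your argument is correct and is essentially the paper's own approach: both proofs plug the Chebyshev representations \eqref{4.4}--\eqref{4.5} into the exponential/de~Moivre identity for $T_n$ and $U_{n-1}$, using the key simplification $(z+2)^2-(z+1)(z+3)=1$. The only organisational difference is that the paper uses just one sign of the de~Moivre identity (in the form \eqref{4.9}) to obtain \eqref{4.6}, then invokes the Pell relation $T_n^2-(v^2-1)U_{n-1}^2=1$ for \eqref{4.7} and divides to get \eqref{4.8}; you instead take both signs at once to get \eqref{4.6} and \eqref{4.8} simultaneously and then multiply for \eqref{4.7}, which is marginally more streamlined but amounts to the same computation (the Pell relation is precisely the product of the two de~Moivre identities).
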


The identity \eqref{4.6} is somewhat similar to (4.4) in \cite{DE11}, in the
proof of which the identity \eqref{4.9} below was also derived and used. 

\begin{proof}[Proof of Proposition~\ref{prop:4.2}]
We use the defining identities
\[
\sin{\theta}\cdot U_{n-1}(\cos{\theta})=\sin(n\theta),\qquad
T_n(\cos{\theta})=\cos(n\theta).
\]
Multiplying both sides of the left identity by $i$, then adding both and using
$2i\sin{\theta}=e^{i\theta}-e^{-i\theta},\,
2\cos{\theta}=e^{i\theta}+e^{-i\theta}$, and $u:=e^{i\theta}$, we get
\begin{equation}\label{4.9}
\frac{u-u^{-1}}{2}U_{n-1}\left(\frac{u+u^{-1}}{2}\right)
+T_n\left(\frac{u+u^{-1}}{2}\right) = u^n,\qquad n\geq 1.
\end{equation}
We now set $u=\sqrt{(z+1)/(z+3)}$ and verify that
\[
\frac{u-u^{-1}}{2}=\frac{-1}{\sqrt{(z+1)(z+3)}},\qquad
\frac{u+u^{-1}}{2}=\frac{z+2}{\sqrt{(z+1)(z+3)}}.
\] 
Upon substituting these two identities into \eqref{4.9} and multiplying both 
sides by $((z+1)(z+3))^{n/2}$, we get
\begin{align}
-\big((z+1)&(z+3)\big)^{(n-1)/2}\cdot
U_{n-1}\left(\frac{z+2}{\sqrt{(z+1)(z+3)}}\right)\label{4.10}\\
&+\big((z+1)(z+3)\big)^{n/2}\cdot
T_n\left(\frac{z+2}{\sqrt{(z+1)(z+3)}}\right)=(z+1)^n.\nonumber
\end{align}
Finally, with \eqref{4.4} and \eqref{4.5} we see that \eqref{4.10} is 
equivalent to \eqref{4.6}.

To prove \eqref{4.7}, we use the well-known Pell-type equation
\[
T_n(x)^2-(x^2-1)U_{n-1}(x)^2=1
\]
for the Chebyshev polynomials (see, e.g., \cite[p.~9]{Ri}), which in our case
becomes
\begin{equation}\label{4.11}
T_n\left(\frac{z+2}{\sqrt{(z+1)(z+3)}}\right)^2
-\frac{1}{(z+1)(z+3)}U_{n-1}\left(\frac{z+2}{\sqrt{(z+1)(z+3)}}\right)^2=1.
\end{equation}
Upon multiplying both sides of \eqref{4.11} by $((z+1)(z+3))^n$, we see that
\eqref{4.7} immediately follows from \eqref{4.4} and \eqref{4.5}. Finally, 
the identity \eqref{4.8} follows from dividing \eqref{4.7} by \eqref{4.6}.
\end{proof}

By adding, respectively subtracting, \eqref{4.6} and \eqref{4.8}, we obtain
the following consequences.

\begin{corollary}\label{cor:4.3}
For all $n\geq 0$ we have
\begin{align}
Q_n^{(1)}(z) &= \frac{(z+3)^n-(z+1)^n}{2}
= \sum_{j=0}^n\binom{n}{j}\frac{3^{n-j}-1}{2}\,z^j,\label{4.12}\\
R_n^{(1)}(z) &= \frac{(z+3)^n+(z+1)^n}{2}
= \sum_{j=0}^n\binom{n}{j}\frac{3^{n-j}+1}{2}\,z^j.\label{4.13}
\end{align}
\end{corollary}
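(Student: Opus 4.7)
The plan is to derive Corollary~\ref{cor:4.3} as a direct algebraic consequence of the two identities \eqref{4.6} and \eqref{4.8} already established in Proposition~\ref{prop:4.2}, followed by a straightforward application of the binomial theorem. This is essentially a two-line proof once Proposition~\ref{prop:4.2} is in hand, so no significant obstacle is expected.

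First, I would subtract \eqref{4.6} from \eqref{4.8} to obtain $2Q_n^{(1)}(z) = (z+3)^n - (z+1)^n$, yielding the first equality in \eqref{4.12}. Next, adding \eqref{4.6} and \eqref{4.8} gives $2R_n^{(1)}(z) = (z+3)^n + (z+1)^n$, yielding the first equality in \eqref{4.13}. For the second equality in each line, I would expand $(z+3)^n$ and $(z+1)^n$ via the binomial theorem as
\[
(z+3)^n = \sum_{j=0}^n \binom{n}{j} 3^{n-j} z^j, \qquad (z+1)^n = \sum_{j=0}^n \binom{n}{j} z^j,
\]
and then combine term-by-term, which produces the claimed coefficients $\frac{3^{n-j} \mp 1}{2}$ in the respective sums.

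A brief consistency check I would include, just to reassure the reader, is that the constant and leading coefficients match those already recorded in \eqref{4.3}: setting $j=0$ recovers $c_n(0) = (3^n-1)/2$ and $d_n(0) = (3^n+1)/2$, while setting $j=n$ recovers $d_n(n) = 1$, and the $j=n-1$ term of \eqref{4.12} gives $c_n(n-1) = n \cdot (3-1)/2 = n$. No induction or further machinery is required, and there is no step that poses any real difficulty.
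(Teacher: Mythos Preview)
Your proposal is correct and matches the paper's own argument exactly: the paper states that Corollary~\ref{cor:4.3} follows by adding, respectively subtracting, \eqref{4.6} and \eqref{4.8}, with the second equalities in \eqref{4.12} and \eqref{4.13} being the obvious binomial expansions. Your additional consistency check against \eqref{4.3} is a nice touch that the paper also mentions immediately after stating the corollary.
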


Comparing Corollary~\ref{cor:4.3} with the notation in \eqref{4.2}, we see that
the identities in \eqref{4.3} are special cases of \eqref{4.12} and 
\eqref{4.13}. Proposition~\ref{prop:4.1} combined with Corollary~\ref{cor:4.3}
now leads to the following combinatorial interpretations.

\begin{corollary}\label{cor:4.4}
Considering colored base-$3$ partitions restricted by \eqref{1.1}, for each
$j=0,1,\ldots,n$,

$(a)$ there are $\frac{1}{2}\binom{n}{j}(3^{n-j}-1)$ partitions of 
$\frac{1}{2}(3^n-3)$ that have exactly $j$ single unmarked parts.

$(b)$ there are $\frac{1}{2}\binom{n}{j}(3^{n-j}+1)$ partitions of 
$\frac{1}{2}(3^n-1)$ that have exactly $j$ single unmarked parts. 
\end{corollary}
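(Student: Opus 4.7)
The plan is to simply match the two different descriptions of the polynomials $Q_n^{(1)}(z)$ and $R_n^{(1)}(z)$ that are already available in the excerpt, and then read off the coefficients. On the combinatorial side, Proposition~\ref{prop:4.1} tells us that if we write
\[
Q_n^{(1)}(z)=\sum_{k=0}^{n-1}c_n(k)z^k,\qquad R_n^{(1)}(z)=\sum_{k=0}^{n}d_n(k)z^k,
\]
then $c_n(j)$ counts exactly the colored base-$3$ partitions of $\tfrac{1}{2}(3^n-3)$ restricted by \eqref{1.1} with precisely $j$ single unmarked powers of $3$, and $d_n(j)$ counts the analogous partitions of $\tfrac{1}{2}(3^n-1)$.

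On the algebraic side, Corollary~\ref{cor:4.3} supplies the explicit closed-form expansions
\[
Q_n^{(1)}(z)=\sum_{j=0}^{n}\binom{n}{j}\frac{3^{n-j}-1}{2}z^j,\qquad
R_n^{(1)}(z)=\sum_{j=0}^{n}\binom{n}{j}\frac{3^{n-j}+1}{2}z^j.
\]
Equating the coefficient of $z^j$ in each of these two expressions with that in the corresponding expansion from Proposition~\ref{prop:4.1} immediately yields $c_n(j)=\tfrac{1}{2}\binom{n}{j}(3^{n-j}-1)$ and $d_n(j)=\tfrac{1}{2}\binom{n}{j}(3^{n-j}+1)$, which are exactly statements (a) and (b).

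The only minor point to mention is a range check: Proposition~\ref{prop:4.1} lists $c_n(k)$ only for $0\leq k\leq n-1$, whereas the corollary asserts a formula for all $0\leq j\leq n$. At $j=n$ the formula in (a) gives $\tfrac{1}{2}\binom{n}{n}(3^0-1)=0$, which agrees with the fact that no such partition can have all $n$ summands as single unmarked parts, so the statement holds vacuously there. There is no real obstacle in the argument; all the substantive work was carried out in establishing Corollary~\ref{cor:4.3} via the Chebyshev-polynomial identities of Proposition~\ref{prop:4.2}, and the present corollary is merely the combinatorial reading of those closed forms.
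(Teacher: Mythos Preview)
Your proposal is correct and matches the paper's own justification exactly: the paper simply states that Proposition~\ref{prop:4.1} combined with Corollary~\ref{cor:4.3} yields the combinatorial interpretation, which is precisely the coefficient comparison you carry out. Your additional range check at $j=n$ for part~(a) is a nice clarification that the paper leaves implicit.
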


\medskip
\noindent
{\bf Example~4.} (a) For $n=2$, the six partitions of $\frac{1}{2}(3^2-3)=3$ are
given in Example~1 (note the different use of $n$). Four partitions have no
single unmarked part, two have exactly one, and there is no partition with
two unmarked parts, consistent with Corollary~\ref{cor:4.4}(a).

(b) Taking again $n=2$, the ten partitions of $\frac{1}{2}(3^2-1)=4$ are, in
the notation of Example~2:
\[
(31),(3\overline{1}),(3\tilde{1}),
(\overline{3}1),(\overline{3}\overline{1}),(\overline{3}\tilde{1}),
(\tilde{3}1),(\tilde{3}\overline{1}),(\tilde{3}\tilde{1}),
(11\overline{1}\tilde{1}).
\]
We see that $\frac{1}{2}(3^2+1)=5$ of these partitions have no single
unmarked parts, while $\frac{1}{2}\binom{2}{1}(3^1+1)=4$ have one unmarked 
part and one partition, namely $(31)$, has two unmarked parts.

\medskip
We conclude this section with a few more properties of the polynomials
$Q_n^{(1)}(z)$ and $R_n^{(1)}(z)$. First, upon replacing $z$ by $z-2$ in the
first equalities of \eqref{4.12} and \eqref{4.13}, we get the following simple
representations.

\begin{corollary}\label{cor:4.5}
For all $n\geq 0$, 
\begin{equation}\label{4.14}
Q_n^{(1)}(z-2)
=\sum_{j=0}^{\lfloor\frac{n-1}{2}\rfloor}\binom{n}{2j+1}z^{n-2j-1},\quad
R_n^{(1)}(z-2)
=\sum_{j=0}^{\lfloor\frac{n}{2}\rfloor}\binom{n}{2j}z^{n-2j}.
\end{equation}
\end{corollary}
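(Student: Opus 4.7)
The plan is to start directly from the closed forms in Corollary~\ref{cor:4.3}, namely
\[
Q_n^{(1)}(z)=\frac{(z+3)^n-(z+1)^n}{2},\qquad R_n^{(1)}(z)=\frac{(z+3)^n+(z+1)^n}{2},
\]
and simply perform the substitution $z\mapsto z-2$. This immediately yields
\[
Q_n^{(1)}(z-2)=\frac{(z+1)^n-(z-1)^n}{2},\qquad R_n^{(1)}(z-2)=\frac{(z+1)^n+(z-1)^n}{2},
\]
after which the whole job reduces to a binomial expansion.

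Next, I would apply the binomial theorem to both $(z+1)^n$ and $(z-1)^n$. Writing
\[
(z+1)^n=\sum_{k=0}^n\binom{n}{k}z^{n-k},\qquad
(z-1)^n=\sum_{k=0}^n\binom{n}{k}(-1)^k z^{n-k},
\]
the difference picks out exactly the terms with $k$ odd (doubled), while the sum picks out exactly the terms with $k$ even (doubled). Dividing by $2$ cancels the factor, and reindexing $k=2j+1$ for the odd case and $k=2j$ for the even case gives precisely the two identities in \eqref{4.14}.

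There is essentially no obstacle here: the argument is a one-line substitution followed by a standard parity-splitting of a binomial expansion. The only minor bookkeeping is to note that the upper summation limits $\lfloor(n-1)/2\rfloor$ and $\lfloor n/2\rfloor$ correspond to the largest $j$ for which $2j+1\leq n$ and $2j\leq n$, respectively, which justifies the stated ranges without needing to treat $n$ even and $n$ odd separately. No induction, no appeal to Chebyshev identities, and no manipulation of recurrences is required, since all of the nontrivial work has already been done in Corollary~\ref{cor:4.3}.
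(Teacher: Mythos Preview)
Your proof is correct and follows exactly the same approach as the paper, which simply says ``upon replacing $z$ by $z-2$ in the first equalities of \eqref{4.12} and \eqref{4.13}, we get the following simple representations.'' You have merely spelled out the binomial expansion and parity-splitting that the paper leaves implicit.
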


Concerning a related shift, it is a consequence of the identity \eqref{4.6} 
that $R_n^{(1)}(z-1)$
and $Q_n^{(1)}(z-1)$ differ by only the leading term $z^n$ of $R_n^{(1)}(z-1)$.
Also, as mentioned in entry A082137 of \cite{OEIS}, the coefficients of this 
last polynomial appear in \cite[Table~1]{LU} in a different combinatorial 
setting.

We can also determine the zero distribution of the polynomials in question.

\begin{proposition}\label{prop:4.6}
All the zeros of the polynomials $Q_n^{(1)}(z)$ and $R_n^{(1)}(z)$ lie on the
vertical line ${\rm Re}(z)=-2$. The zeros are all nonreal with the exception
of $Q_{2n}^{(1)}(-2)=0$ and $R_{2n+1}^{(1)}(-2)=0$ for all $n\geq 0$.
\end{proposition}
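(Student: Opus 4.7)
The plan is to use the closed forms in Corollary~\ref{cor:4.3}, which reduce the zero distribution to a question about a Möbius image of roots of unity. Starting from
\[
Q_n^{(1)}(z) = \tfrac{1}{2}\bigl((z+3)^n-(z+1)^n\bigr),\qquad
R_n^{(1)}(z) = \tfrac{1}{2}\bigl((z+3)^n+(z+1)^n\bigr),
\]
I note that $z=-1$ is never a zero (it gives $\pm 1$ on the right), so I may set $w:=(z+3)/(z+1)$. The equations $Q_n^{(1)}(z)=0$ and $R_n^{(1)}(z)=0$ then become $w^n=1$ and $w^n=-1$ respectively, i.e.\ $w$ is a root of unity distinct from $1$ in the first case, and a primitive $2n$-th root of $-1$ in the second. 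Counting such $w$ yields $n-1$ and $n$ roots, matching the degrees of $Q_n^{(1)}$ and $R_n^{(1)}$.

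Next I invert the Möbius map: solving $w(z+1)=z+3$ gives $z=(3-w)/(w-1)$, hence
\[
z+2=\frac{w+1}{w-1}.
\]
The key computational step is to verify that this Möbius transformation sends the unit circle $\{|w|=1,\,w\neq 1\}$ bijectively onto the imaginary axis. Writing $w=e^{i\theta}$ and multiplying numerator and denominator of $(w+1)/(w-1)$ by $e^{-i\theta/2}$ yields $-i\cot(\theta/2)$, which is purely imaginary. Equivalently, $(w+1)(\overline{w}-1)=|w|^2-w+\overline{w}-1=-2i\,\mathrm{Im}(w)$ when $|w|=1$. Either way $z+2$ is purely imaginary, so every zero of $Q_n^{(1)}(z)$ and $R_n^{(1)}(z)$ satisfies $\mathrm{Re}(z)=-2$, as claimed.

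For the reality question, $z$ is real precisely when $z+2=-i\cot(\theta/2)=0$, i.e.\ $\theta=\pi$, giving $w=-1$ and $z=-2$. Thus $z=-2$ is the \emph{only} possible real zero. It is a zero of $Q_n^{(1)}$ iff $(-1)^n=1$, i.e.\ $n$ even, and a zero of $R_n^{(1)}$ iff $(-1)^n=-1$, i.e.\ $n$ odd. This gives exactly $Q_{2n}^{(1)}(-2)=0$ and $R_{2n+1}^{(1)}(-2)=0$, while all other zeros are genuinely nonreal.

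The whole argument is short once Corollary~\ref{cor:4.3} is in hand; there is no real obstacle beyond correctly identifying the Möbius transformation and carrying out the one-line check that it maps the unit circle to the line $\mathrm{Re}(z)=-2$. The only minor point to verify is that no spurious cancellation reduces the degrees below $n-1$ and $n$, which follows from examining the coefficient of $z^{n-1}$ in $(z+3)^n-(z+1)^n$, namely $n(3-1)/2=n\neq 0$, so the $n-1$ roots of unity $\neq 1$ and the $n$ solutions of $w^n=-1$ account for all zeros.
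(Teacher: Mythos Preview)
Your proof is correct, and it takes a genuinely different route from the paper's. The paper works through the Chebyshev representations \eqref{4.4} and \eqref{4.5}: after shifting $z\mapsto z-2$, a zero of $Q_{n+1}^{(1)}$ or $R_n^{(1)}$ corresponds to a real $v\in(-1,1)$ via $v=z/\sqrt{z^2-1}$, and solving for $z$ gives $z=\pm i\sqrt{v^2/(1-v^2)}$, which is purely imaginary; the real-zero statement is then read off from Corollary~\ref{cor:4.5}. Your argument instead starts from the closed forms of Corollary~\ref{cor:4.3} and recasts the problem as locating the M\"obius image $z+2=(w+1)/(w-1)$ of the $n$th roots of $\pm 1$, which lands on the imaginary axis by the one-line computation $(w+1)/(w-1)=-i\cot(\theta/2)$. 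This is more elementary---it avoids any appeal to the zero locus of Chebyshev polynomials---and it handles the real-zero case in the same breath rather than invoking a separate corollary. The paper's approach, on the other hand, fits the Chebyshev theme that runs through Sections~\ref{sec:4} and~\ref{sec:5} and parallels the later proofs of Propositions~\ref{prop:5.3} and~\ref{prop:5.6}. One cosmetic remark: the phrase ``primitive $2n$-th root of $-1$'' is nonstandard; you simply mean the $n$ solutions of $w^n=-1$.
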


\begin{proof}
We replace $z$ by $z-2$ in \eqref{4.4} and \eqref{4.5} and recall that the
zeros of $U_n(v)$ and $T_n(v)$ are all real and satisfy $-1<v<1$. This means
that we need to solve the equation $v=z/\sqrt{z^2-1}$ for $z$, and we get
\begin{equation}\label{4.15}
z = \pm i\sqrt{\frac{v^2}{1-v^2}}.
\end{equation}
Since the expression in the square root is real and positive, this proves the
first statement of the proposition. The second statement follows from 
Corollary~\ref{cor:4.5}.
\end{proof}

\section{Single-variable polynomials: Some further cases}\label{sec:5}

In this section we work out the details of two further cases, and in the end we
identify six more cases that could be done by using the same methods. In all 
cases we only consider the polynomials $Q_n(Z)$. Analogous results could also 
be obtained with the polynomials $R_n(Z)$ by making some necessary adjustments
such as using \eqref{3.5} and the second part of \eqref{3.1}. One main 
difference is that, in contrast to the situation in Section~\ref{sec:4}, the
additional summand in \eqref{3.15} makes it difficult, if not impossible, to 
determine the distribution of zeros of $R_n(Z)$.

\subsection{The case $Z=Z_2:=(z,z,z,z^2)$} 

In analogy to Section~\ref{sec:4}, we set 
\[
Q_n^{(2)}(z):=Q_n(Z_2).
\]
By \eqref{3.2} and \eqref{3.3} we have
\begin{equation}\label{5.1}
W_1^{(2)}(z):=W_1(Z_2)=3z(z^2+1),\qquad W_2^{(2)}(z):=W_2(Z_2)=8z^4,
\end{equation}
and with Proposition~\ref{prop:3.1} we can easily compute the entries in
Table~2.

\bigskip
\begin{center}
{\renewcommand{\arraystretch}{1.1}
\begin{tabular}{|r|l|}
\hline
$n$ & $Q_n^{(2)}(z)$ \\
\hline
1 & 1 \\
2 & $3z^3 + 3z$ \\
3 & $9z^6 + 10z^4 + 9z^2$ \\
4 & $27z^9 + 33z^7 + 33z^5 + 27z^3$ \\
5 & $81z^{12} + 108z^{10} + 118z^8 + 108z^6 + 81z^4$ \\
6 & $243z^{15} + 351z^{13} + 414z^{11} + 414z^9 + 351z^7 + 243z^5$ \\
7 & $729z^{18}+1134z^{16}+1431z^{14}+1540z^{12}+1431z^{10}+1134z^8+729z^6$ \\
\hline
\end{tabular}}

\medskip
{\bf Table~3}: $Q_n^{(2)}(z)$ for $1\leq n\leq 7$.
\end{center}
\bigskip

Before dealing with combinatorial interpretations of the polynomials 
$Q_n^{(2)}(z)$, we prove some basic properties that are apparent from Table~3.
As before, the connection with Chebyshev polynomials will be an important tool.
Substituting the identities from \eqref{5.1} into \eqref{3.14}, we get
\begin{equation}\label{5.2}
Q_{n+1}^{(2)}(z)=\left(2\sqrt{2}z^2\right)^n\cdot
U_n\left(\tfrac{3}{4\sqrt{2}}(z+z^{-1})\right).
\end{equation}
We can now state and prove the following properties.

\begin{proposition}\label{prop:5.1}
For $n\geq 0$, the polynomial $Q_{n+1}^{(2)}(z)$ has degree $3n$, its term with
lowest degree has degree $n$, it is palindromic, and its leading coefficient
is $3^n$. Furthermore, when $n$ is even (resp.\ odd), $Q_{n+1}^{(2)}(z)$ has
only even (resp.\ odd) powers of $z$.
\end{proposition}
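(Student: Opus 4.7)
My plan is to derive all four claims uniformly from the Chebyshev identity (5.2), together with two standard facts about $U_n(v)$: it has degree $n$ with leading coefficient $2^n$, and only monomials $v^k$ with $k\equiv n\pmod 2$ appear in its expansion. The whole proof reduces to careful bookkeeping of the substitution $v=\tfrac{3}{4\sqrt 2}(z+z^{-1})$ combined with the outer factor $(2\sqrt 2 z^2)^n$.

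First I would handle the degree, lowest-degree term, and leading coefficient at once. Write $U_n(v)=\sum_k a_k v^k$. Substituting $v=\tfrac{3}{4\sqrt 2}(z+z^{-1})$ yields a Laurent polynomial in $z$ in which each monomial $v^k$ contributes powers of $z$ between $z^{-k}$ and $z^{k}$. The factor $(2\sqrt 2)^n z^{2n}$ in (5.2) shifts this range by $+2n$, so overall the powers of $z$ occurring in $Q_{n+1}^{(2)}(z)$ lie between $z^{2n-n}=z^n$ and $z^{2n+n}=z^{3n}$, with both extremes attained only by the $k=n$ term of $U_n$. A direct computation of the extreme coefficient gives $(2\sqrt 2)^n\cdot a_n\cdot\bigl(\tfrac{3}{4\sqrt 2}\bigr)^n=2^n\cdot 3^n\cdot\bigl(\tfrac{2\sqrt 2}{4\sqrt 2}\bigr)^n=3^n$, so the leading coefficient is $3^n$ and, by the palindromicity established below, so is the coefficient of $z^n$.

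For the parity claim, I would observe that $U_n$ contains only monomials $v^k$ with $k$ of the same parity as $n$, and that each $(z+z^{-1})^k$ contains only powers of $z$ of parity $k\bmod 2$. Hence every nonzero term in $U_n\bigl(\tfrac{3}{4\sqrt 2}(z+z^{-1})\bigr)$ sits at a power of $z$ congruent to $n\pmod 2$, and the even shift by $z^{2n}$ preserves this parity.

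Finally, palindromicity falls out of the $z\mapsto z^{-1}$ symmetry of $z+z^{-1}$: from (5.2),
\[
Q_{n+1}^{(2)}(z^{-1})
=(2\sqrt 2)^n z^{-2n}\,U_n\!\left(\tfrac{3}{4\sqrt 2}(z+z^{-1})\right)
=z^{-4n}\,Q_{n+1}^{(2)}(z),
\]
so $z^{4n}Q_{n+1}^{(2)}(z^{-1})=Q_{n+1}^{(2)}(z)$, which together with the degree bounds says $[z^{n+j}]Q_{n+1}^{(2)}=[z^{3n-j}]Q_{n+1}^{(2)}$ for all $j$. The only mildly delicate point is the cancellation of the irrational factors $(2\sqrt 2)^n$ and $(4\sqrt 2)^n$ in the leading-coefficient calculation; once those telescope against the $2^n$ from $a_n$, the integer $3^n$ emerges and the rest of the argument is essentially formal.
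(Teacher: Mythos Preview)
Your argument is correct and complete: the four claims really do all fall out of (5.2) once one knows $\deg U_n=n$, that the leading coefficient of $U_n$ is $2^n$, and that $U_n$ has the parity $U_n(-v)=(-1)^nU_n(v)$. The only point worth flagging is that the relation you derive, $z^{4n}Q_{n+1}^{(2)}(z^{-1})=Q_{n+1}^{(2)}(z)$, is not the usual reciprocity $z^{\deg}P(z^{-1})=P(z)$ for a polynomial of degree $3n$; but since you have already shown the support lies in $[z^n,z^{3n}]$, this identity is exactly the statement $[z^{n+j}]=[z^{3n-j}]$, so the palindromicity claim is justified.

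The paper organizes the proof differently. Rather than extracting everything from (5.2), it first passes to the auxiliary polynomials $\widetilde{Q}_{n+1}^{(2)}(z):=z^{-n}Q_{n+1}^{(2)}(z)$, derives the simpler recurrence $\widetilde{Q}_n^{(2)}=3(z^2+1)\widetilde{Q}_{n-1}^{(2)}-8z^2\widetilde{Q}_{n-2}^{(2)}$, and then uses induction on that recurrence to read off the degree and the leading coefficient $3^n$; the parity statement likewise comes from induction on the original recurrence (5.3). Only the palindromicity is obtained via the Chebyshev identity, and there it is the standard reciprocity $z^{2n}\widetilde{Q}_{n+1}^{(2)}(1/z)=\widetilde{Q}_{n+1}^{(2)}(z)$ for a genuine degree-$2n$ polynomial. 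Your route is more uniform and avoids the inductions entirely; the paper's route keeps the arithmetic integer-valued throughout (no $\sqrt{2}$ to cancel) and makes the reciprocal property appear in its textbook form, at the cost of introducing the shifted auxiliary sequence.
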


\begin{proof}
With \eqref{3.4} and \eqref{5.1} we have the recurrence $Q_0^{(2)}(z)=0$,
$Q_1^{(2)}(z)=1$, and for $n\geq 2$, 
\begin{equation}\label{5.3}
Q_n^{(2)}(z)=3z(z^2+1)Q_{n-1}^{(2)}(z)-8z^4Q_{n-2}^{(2)}(z).
\end{equation}
Then the modified polynomials, which we define by $\widetilde{Q}_0^{(2)}(z)=0$ 
and
\begin{equation}\label{5.4}
\widetilde{Q}_{n+1}^{(2)}(z):=z^{-n}Q_{n+1}^{(2)}(z)\quad (n\geq 0),
\end{equation}
satisfy the recurrence given by $\widetilde{Q}_1^{(2)}(z)=1$ and for $n\geq 2$,
\begin{equation}\label{5.5}
\widetilde{Q}_n^{(2)}(z)=3(z^2+1)\widetilde{Q}_{n-1}^{(2)}(z)
-8z^2\widetilde{Q}_{n-2}^{(2)}(z).
\end{equation}
Now an easy induction using \eqref{5.5} and the initial terms shows that 
$\deg{\widetilde{Q}_{n+1}^{(2)}(z)}=2n$ and that the leading coefficient of
$\widetilde{Q}_{n+1}^{(2)}(z)$ is $3^n$.

Next, by \eqref{5.2} and \eqref{5.4} we have
\begin{equation}\label{5.6}
\widetilde{Q}_{n+1}^{(2)}(z)=\left(2\sqrt{2}z\right)^n\cdot
U_n\left(\tfrac{3}{4\sqrt{2}}(z+z^{-1})\right),
\end{equation}
which immediately shows that
\[
z^{2n}\widetilde{Q}_{n+1}^{(2)}(\tfrac{1}{z}) = \widetilde{Q}_{n+1}^{(2)}(z),
\]
which means that $\widetilde{Q}_{n+1}^{(2)}(z)$ is a reciprocal (or palindromic)
polynomial. Finally, the statement concerning the parity of the powers follows
from \eqref{5.3} by an easy induction. This completes the proof of the 
proposition.
\end{proof}

With Proposition~\ref{prop:2.3} and \eqref{3.1}, along with 
Proposition~\ref{prop:5.1}, we can now state the following properties.

\begin{proposition}\label{prop:5.2}
For $n\geq 1$, let
\[
Q_n^{(2)}(z) = \sum_{k=n-1}^{3n-3}e_n(k)z^k.
\]
Then $e_n(k)$ counts the number of colored base-$3$ partitions of
$\frac{1}{2}(3^n-3)$, restricted by \eqref{1.1} and having a total of $k$ parts.
Furthermore,
\begin{enumerate}
\item[(a)] the number of parts lies between $n-1$ and $3n-3$,
\item[(b)] the number of parts cannot have the same parity as $n$,
\item[(c)] for $0\leq j\leq\lfloor(n-1)/2\rfloor$, there are as many partitions
with $n-1+j$ parts as there are with $3n-3-j$ parts,
\item[(d)] there are $3^{n-1}$ partitions with $n-1$ parts.
\end{enumerate}
\end{proposition}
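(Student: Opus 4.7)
The plan is to reduce every claim to facts already established. First I verify the combinatorial interpretation of $e_n(k)$. With the specialization $Z=Z_2=(z,z,z,z^2)$, the monomial $w^i x^j y^k z^\ell$ appearing in the expansion \eqref{2.3} of Proposition~\ref{prop:2.3} becomes $z^{i+j+k+2\ell}$, and by the meaning of $c(i,j,k,\ell)$ this exponent is precisely the total number of parts of the corresponding partition: each overlined, tilded, or singleton unmarked part contributes one factor of $z$, while a pair of unmarked parts of the same power contributes two parts and is weighted by $z^2$. Since $Q_n^{(2)}(z)=Q_n(Z_2)=S\bigl(\tfrac{3^n-3}{2};Z_2\bigr)$ by \eqref{3.1}, the coefficient $e_n(k)$ counts exactly the colored base-$3$ partitions of $\tfrac{1}{2}(3^n-3)$, restricted by \eqref{1.1}, whose total number of parts is $k$.

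Claims (a) and (b) are then immediate consequences of Proposition~\ref{prop:5.1} after the re-indexing $n+1\mapsto n$: $Q_n^{(2)}(z)$ has degree $3n-3$ with lowest-degree monomial $z^{n-1}$ (giving (a)), and all of its nonzero powers of $z$ share the parity of $n-1$, which is opposite to the parity of $n$ (giving (b)).

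For (c) I invoke the palindromicity of $\widetilde{Q}_n^{(2)}(z):=z^{-(n-1)}Q_n^{(2)}(z)$, a polynomial of degree $2n-2$ by Proposition~\ref{prop:5.1}. The coefficient symmetry $[z^j]\widetilde{Q}_n^{(2)}=[z^{2n-2-j}]\widetilde{Q}_n^{(2)}$ translates, via the shift $k=n-1+j$, into $e_n(n-1+j)=e_n(3n-3-j)$, which is (c). Claim (d) then follows by combining this palindromicity with the fact, also recorded in Proposition~\ref{prop:5.1}, that the leading coefficient of $\widetilde{Q}_n^{(2)}(z)$ equals $3^{n-1}$: the constant term of $\widetilde{Q}_n^{(2)}$ must then also equal $3^{n-1}$, and this constant term is precisely $e_n(n-1)$.

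There is no deep obstacle here; once the specialization $Z\to Z_2$ is given its combinatorial reading---essentially one line---parts (a)--(d) are formal consequences of Proposition~\ref{prop:5.1}. The only point requiring care is bookkeeping: keeping straight the index shift between $Q_{n+1}^{(2)}$ (the form in which Proposition~\ref{prop:5.1} is stated) and $Q_n^{(2)}$ (the form used here), and recalling that in Proposition~\ref{prop:2.3} the variable $z$ tracks \emph{pairs} of unmarked parts, so the total-parts exponent is $i+j+k+2\ell$ rather than $i+j+k+\ell$.
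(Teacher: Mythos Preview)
Your proposal is correct and follows exactly the route the paper indicates: the paper's entire argument is the single sentence ``With Proposition~\ref{prop:2.3} and \eqref{3.1}, along with Proposition~\ref{prop:5.1}, we can now state the following properties,'' and you have simply unpacked that sentence carefully, including the key observation that under $Z_2=(z,z,z,z^2)$ the exponent $i+j+k+2\ell$ records the total number of parts. Your handling of the index shift from $Q_{n+1}^{(2)}$ to $Q_n^{(2)}$ and of the palindromicity-to-symmetry translation is accurate.
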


\noindent
{\bf Example~5.} (1) Let $n=2$. Then $Q_2^{(2)}(z)=3z^3+3z$, consistent with
Example~1 which shows 3 partitions with three parts and 3 partitions with
one part.

(2) Similarly, $Q_3^{(2)}(z)=9z^6+10z^4+9z^2$ is consistent with Example~2,
which shows 9 partitions each with six and with two parts, and 10 partitions
with four parts. 

\medskip
The polynomials $Q_n^{(2)}(z)$ also have an interesting zero distribution, as
we will now see.

\begin{proposition}\label{prop:5.3}
The zeros of all polynomials $Q_n^{(2)}(z)$, $n\geq 2$, lie on the two segments
of the unit circle that satisfy $|\rm{Im}(z)|>\frac{1}{3}.$
\end{proposition}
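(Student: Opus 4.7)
The plan is to exploit the identity \eqref{5.2}, which expresses $Q_{n+1}^{(2)}(z)$ in terms of $U_n$ evaluated at $\frac{3}{4\sqrt{2}}(z+z^{-1})$. Shifting the index, for $n\geq 2$ we have
\[
Q_n^{(2)}(z) = \bigl(2\sqrt{2}z^2\bigr)^{n-1}\cdot U_{n-1}\!\left(\tfrac{3}{4\sqrt{2}}(z+z^{-1})\right).
\]
The factor $z^{2(n-1)}$ only contributes zeros at the origin (which account for the lowest-degree term $z^{n-1}$ identified in Proposition~\ref{prop:5.1}), so the remaining zeros of $Q_n^{(2)}(z)$ come from solving $U_{n-1}\bigl(\tfrac{3}{4\sqrt{2}}(z+z^{-1})\bigr) = 0$.

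Next, I would use the classical fact that the zeros of $U_{n-1}(v)$ are the real numbers $v_k = \cos(k\pi/n)$ for $k=1,\dots,n-1$, all strictly between $-1$ and $1$. For each such $v_k$, the equation $\tfrac{3}{4\sqrt{2}}(z+z^{-1}) = v_k$ becomes the quadratic
\[
z^2 - \tfrac{4\sqrt{2}}{3}v_k\, z + 1 = 0.
\]
Since the product of the two roots is $1$ and the coefficient $\tfrac{4\sqrt{2}}{3}v_k$ is real with $\bigl|\tfrac{4\sqrt{2}}{3}v_k\bigr| < \tfrac{4\sqrt{2}}{3} < 2$, the discriminant is negative and the roots form a complex-conjugate pair. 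Combined with the product being $1$, this forces both roots to lie on the unit circle.

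Finally, writing such a root as $z = e^{i\theta}$, the quadratic gives $2\cos\theta = \tfrac{4\sqrt{2}}{3}v_k$, and hence
\[
\cos^2\theta = \tfrac{8}{9}v_k^2 < \tfrac{8}{9},
\]
which yields $|\operatorname{Im}(z)|^2 = \sin^2\theta = 1 - \cos^2\theta > \tfrac{1}{9}$, i.e.\ $|\operatorname{Im}(z)| > \tfrac{1}{3}$, as claimed. The only step that requires real care is confirming the localization of the roots on the unit circle; everything else is a direct calculation from the substitution. No step looks like it will cause serious trouble, although one should also remark that the zero at the origin coming from the prefactor is a genuine zero of $Q_n^{(2)}$ not lying on the unit circle, so the statement is naturally understood as referring to the nonzero zeros.
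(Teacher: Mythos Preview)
Your proof is correct and follows essentially the same route as the paper's: both use the Chebyshev representation \eqref{5.2}, reduce to the quadratic $z^2-\tfrac{4\sqrt{2}}{3}v\,z+1=0$ for each zero $v$ of $U_{n-1}$, and read off that the roots lie on the unit circle with $|\operatorname{Im}(z)|>\tfrac{1}{3}$. Your closing remark that $z=0$ is a genuine zero of $Q_n^{(2)}$ not on the unit circle is a valid observation that the paper's own proof passes over in silence.
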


\begin{proof}
By \eqref{5.2}, $z$ is a zero of $Q_{n+1}^{(2)}(z)$ if and only if
\begin{equation}\label{5.7}
v = \frac{3}{4\sqrt{2}}\left(z+\frac{1}{z}\right).
\end{equation}
Solving \eqref{5.7} for $z$, we obtain
\begin{equation}\label{5.8}
z = \frac{2\sqrt{2}}{3}v\pm i\sqrt{1-\tfrac{8}{9}v^2}
\end{equation}
and verify
\[
|z|^2 =\left(\tfrac{2\sqrt{2}}{3}v\right)^2+1-\tfrac{8}{9}\sqrt{2}v^2 = 1,
\]
so the zeros of $Q_{n+1}^{(2)}(z)$ lie on the unit circle. Finally, since the
zeros of $U_n(v)$ satisfy $-1<v<1$, \eqref{5.8} shows that the zeros of 
$Q_{n+1}^{(2)}(z)$ have imaginary parts $>1/3$ or $<-1/3$, which completes 
the proof.
\end{proof}

\subsection{The case $Z=Z_3:=(1,1,z,z)$}

Here we set 
\[
Q_n^{(3)}(z):=Q_n(Z_3).
\]
By again using \eqref{3.2} and \eqref{3.3} we have
\begin{equation}\label{5.1a}
W_1^{(3)}(z):=W_1(Z_3)=4z+2,\qquad W_2^{(3)}(z):=W_2(Z_3)=3z^2+5z.
\end{equation}
Then Proposition~\ref{prop:3.1} gives the recurrence $Q_0^{(3)}(z)=0$,
$Q_1^{(3)}(z)=1$, and for $n\geq 2$,
\begin{equation}\label{5.2a}
Q_n^{(3)}(z)=(4z+2)Q_{n-1}^{(3)}(z)-(3z^2+5z)Q_{n-2}^{(3)}(z).
\end{equation}
With this, we can first compute the entries in Table~4.

\bigskip
\begin{center}
{\renewcommand{\arraystretch}{1.1}
\begin{tabular}{|r|l|}
\hline
$n$ & $Q_n^{(3)}(z)$ \\
\hline
1 & 1 \\
2 & $4z + 2$ \\
3 & $13z^2 + 11z + 4$ \\
4 & $40z^3 + 44z^2 + 28z + 8$ \\
5 & $121z^4 + 158z^3 + 133z^2 + 68z + 16$ \\
6 & $364z^5 + 542z^4 + 544z^3 + 374z^2 + 160z + 32$ \\
7 & $1093z^6 + 1817z^5 + 2071z^4 + 1715z^3 + 1000z^2 + 368z + 64$ \\
\hline
\end{tabular}}

\medskip
{\bf Table~4}: $Q_n^{(3)}(z)$ for $1\leq n\leq 7$.
\end{center}
\bigskip

We now prove some properties of the polynomials $Q_n^{(3)}(z)$ that are 
apparent from Table~4.

\begin{proposition}\label{prop:5.4}
For any $n\geq 1$, polynomial $Q_n^{(3)}(z)$ has the following properties:
\begin{enumerate}
\item[(a)] The degree of $Q_n^{(3)}(z)$ is $n-1$.
\item[(b)] The leading coefficient is $\frac{1}{2}(3^n-1)$.
\item[(c)] The constant coefficient is $2^{n-1}$.
\end{enumerate}
\end{proposition}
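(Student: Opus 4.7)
The plan is to prove all three parts by induction on $n$, using only the recurrence \eqref{5.2a} together with the initial data $Q_0^{(3)}(z)=0$ and $Q_1^{(3)}(z)=1$. The statements for $n=1,2$ are visible in Table~4, providing the base cases.

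For part (c), the argument is essentially immediate. Setting $z=0$ in \eqref{5.2a} makes the quadratic factor $3z^2+5z$ vanish, so if we write $b_n:=Q_n^{(3)}(0)$, then $b_n=2b_{n-1}$ for $n\geq 2$, with $b_1=1$. This yields $b_n=2^{n-1}$ at once.

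For parts (a) and (b), I would treat them together. Assuming inductively that $\deg Q_{n-1}^{(3)}=n-2$ and $\deg Q_{n-2}^{(3)}=n-3$ with leading coefficients $a_{n-1}=\tfrac12(3^{n-1}-1)$ and $a_{n-2}=\tfrac12(3^{n-2}-1)$, the two products $(4z+2)Q_{n-1}^{(3)}(z)$ and $(3z^2+5z)Q_{n-2}^{(3)}(z)$ both contribute terms of degree $n-1$. The coefficient of $z^{n-1}$ in $Q_n^{(3)}(z)$ is therefore
\[
a_n = 4a_{n-1} - 3a_{n-2}.
\]
Substituting the inductive formulas gives $a_n=2(3^{n-1}-1)-\tfrac32(3^{n-2}-1)=\tfrac12(3^n-1)$, which is strictly positive for $n\geq 1$. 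This nonvanishing simultaneously confirms (a), that the degree is exactly $n-1$, and (b), that the leading coefficient takes the claimed value.

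There is no real obstacle here — the only thing to watch is the potential cancellation of leading terms when forming $Q_n^{(3)}$ from the recurrence. That danger is eliminated by the explicit formula $a_n=\tfrac12(3^n-1)>0$, which is itself the content of (b). Alternatively, one could solve the linear recurrence $a_n=4a_{n-1}-3a_{n-2}$ first via its characteristic polynomial $(x-1)(x-3)$ to obtain $a_n=\tfrac12(3^n-1)$ directly, and then use positivity to conclude (a); either ordering gives the result.
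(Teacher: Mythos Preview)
Your proof is correct and follows essentially the same approach as the paper: both derive the recurrence $a_n=4a_{n-1}-3a_{n-2}$ for the top coefficients from \eqref{5.2a} and verify that $\tfrac{1}{2}(3^n-1)$ satisfies it, while part~(c) is obtained identically by setting $z=0$. Your version is slightly more explicit about why the positivity of $a_n$ rules out cancellation and pins down the degree, but there is no substantive difference.
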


\begin{proof}
We prove parts (a) and (b) jointly, using \eqref{5.2a}. First, it is clear from
the recurrence that the degree of $Q_n^{(3)}(z)$ cannot exceed $n-1$. Second,
\eqref{5.2a} gives the following recurrence for the leading coefficients
$\ell_n$: $\ell_0=0$, $\ell_1=1$, and $\ell_n=4\ell_{n-1}-3\ell_{n-2}$ 
$(n\geq 2)$. It is now easy to verify that the sequence $\frac{1}{2}(3^n-1)$
satisfies the same recurrence. This proves parts (a) and (b).

Finally, \eqref{5.2a} also gives $Q_n^{(3)}(0)=2Q_{n-1}^{(3)}(0)$. This,
together with $Q_1^{(3)}(0)=1$, proves part (c).
\end{proof}

Once again, with \eqref{3.1} and Proposition~\ref{prop:2.3} we obtain a 
combinatorial interpretation. Here we also use Proposition~\ref{prop:5.4}.

\begin{proposition}\label{prop:5.5}
For $n\geq 1$, let
\[
Q_n^{(3)}(z) = \sum_{k=0}^{n-1}f_n(k)z^k.
\]
Then $f_n(k)$ counts the number of colored base-$3$ partitions of
$\frac{1}{2}(3^n-3)$, restricted by \eqref{1.1} and such that the number of
single unmarked parts plus the number of pairs of unmarked parts equals $k$.
Furthermore,
\begin{enumerate}
\item[(a)] there are $2^{n-1}$ partitions without unmarked parts,
\item[(b)] the maximal sum of the number of single unmarked parts and the 
number of pairs of unmarked parts in a partition is $n-1$, 
\item[(c)] this maximum is achieved by $\frac{1}{2}(3^n-1)$ partitions.
\end{enumerate}
\end{proposition}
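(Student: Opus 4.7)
The plan is to derive the combinatorial interpretation directly from the general coefficient formula in Proposition~\ref{prop:2.3}, and then read off parts (a)--(c) as immediate translations of Proposition~\ref{prop:5.4}.

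First I would unwind the specialization $Z_3 = (1,1,z,z)$. By the first identity in \eqref{3.1} we have $Q_n^{(3)}(z) = S(\tfrac{3^n-3}{2}; Z_3)$. Substituting $w = x = 1$ and $y = z$ (and the fourth entry also $z$) into the expansion in \eqref{2.3}, the general monomial $w^i x^j y^k z^\ell$ collapses to $z^{k+\ell}$. Therefore
\begin{equation*}
Q_n^{(3)}(z) = \sum_{i,j,k,\ell \geq 0} c(i,j,k,\ell)\, z^{k+\ell},
\end{equation*}
so the coefficient $f_n(k)$ equals $\sum_{a+b=k}\sum_{i,j} c(i,j,a,b)$. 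By Proposition~\ref{prop:2.3} this is exactly the number of colored base-3 partitions of $\tfrac{1}{2}(3^n-3)$, restricted by \eqref{1.1}, for which the number of single unmarked parts plus the number of pairs of unmarked parts equals $k$. This establishes the main interpretation.

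Next I would derive (a)--(c) as immediate corollaries of Proposition~\ref{prop:5.4}. For (a), partitions with no unmarked parts at all correspond to $k+\ell = 0$, hence to $f_n(0) = Q_n^{(3)}(0) = 2^{n-1}$ by Proposition~\ref{prop:5.4}(c). For (b), a partition contributes to $f_n(k)$ only when $z^k$ actually appears in $Q_n^{(3)}(z)$, so the maximum such $k$ is $\deg Q_n^{(3)}(z) = n-1$ by Proposition~\ref{prop:5.4}(a). For (c), the number of partitions attaining this maximum is the leading coefficient, which is $\tfrac{1}{2}(3^n-1)$ by Proposition~\ref{prop:5.4}(b).

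There is essentially no obstacle here; the only subtlety is notational. One must be careful that the symbol $z$ plays two distinct roles in $Z_3 = (1,1,z,z)$: it serves both as the third entry of the quadruple (marking single unmarked parts) and the fourth entry (marking pairs of unmarked parts), and it is precisely this coincidence that causes the two statistics to be added in the exponent. Once that identification is made explicit, the proposition follows at once from Propositions~\ref{prop:2.3} and~\ref{prop:5.4}.
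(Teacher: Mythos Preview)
Your proposal is correct and follows exactly the route the paper takes: it invokes \eqref{3.1} and Proposition~\ref{prop:2.3} to obtain the combinatorial interpretation of $f_n(k)$ under the specialization $Z_3=(1,1,z,z)$, and then reads off parts (a)--(c) directly from Proposition~\ref{prop:5.4}. The paper states this only in a sentence, so your version is simply a more explicit rendering of the same argument.
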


\noindent
{\bf Example~6.} Let $n=2$. Then we see in Example~1 that two partitions have
no unmarked parts, while the remaining four partitions have either one single
unmarked part or one pair of unmarked parts. This is consistent with
$Q_2^{(3)}(z)=4z+2$.

\medskip
We will now show that the polynomials $Q_n^{(3)}(z)$ have particularly 
interesting zero distributions.

\begin{proposition}\label{prop:5.6}
The zeros of all polynomials $Q_n^{(3)}(z)$, $n\geq 2$, lie on the segment of
the circle with radius $\frac{7}{8}$ and centered at $z=\frac{3}{8}$ that 
satisfies Re$(z)<\frac{1}{2}$.
\end{proposition}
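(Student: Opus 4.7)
The plan is to mirror the proofs of Propositions~\ref{prop:4.6} and~\ref{prop:5.3}, using the Chebyshev representation \eqref{3.14}. Substituting $W_1^{(3)}(z)=4z+2$ and $W_2^{(3)}(z)=3z^2+5z$ from \eqref{5.1a} gives
\begin{equation*}
Q_{n+1}^{(3)}(z)=(3z^2+5z)^{n/2}\,U_n\!\left(\frac{2z+1}{\sqrt{3z^2+5z}}\right).
\end{equation*}
Since all zeros of $U_n$ are real and lie in $(-1,1)$, a zero $z$ of $Q_{n+1}^{(3)}(z)$ must satisfy $v=(2z+1)/\sqrt{3z^2+5z}$ for some such $v$. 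Squaring and clearing denominators yields the quadratic
\begin{equation*}
(3v^2-4)z^2+(5v^2-4)z-1=0,
\end{equation*}
whose discriminant is $v^2(25v^2-28)$. For $|v|<1$ this is $\leq 0$, so $z$ and $\bar z$ form a conjugate pair.

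The heart of the argument is then a direct computation of $|z-3/8|^2$ from the quadratic formula. Writing the two roots as $z=\bigl((4-5v^2)\pm iv\sqrt{28-25v^2}\bigr)/\bigl(2(3v^2-4)\bigr)$, one gets
\begin{equation*}
\left|z-\tfrac{3}{8}\right|^2
=\frac{(28-29v^2)^2+16v^2(28-25v^2)}{64(3v^2-4)^2}.
\end{equation*}
The numerator expands to $441v^4-1176v^2+784$, and the key algebraic step is to recognize this as $49(3v^2-4)^2$. The $(3v^2-4)^2$ factors cancel, leaving $|z-3/8|^2=49/64$ independently of $v$, which places every zero on the circle of radius $7/8$ centered at $3/8$.

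To finish, observe that $\operatorname{Re}(z)=(4-5v^2)/\bigl(2(3v^2-4)\bigr)$, and since $3v^2-4<0$ for $|v|<1$, the inequality $\operatorname{Re}(z)<1/2$ is equivalent to $4-5v^2>3v^2-4$, i.e., $v^2<1$, which holds by assumption. Thus the zeros lie on the arc $\operatorname{Re}(z)<1/2$, as claimed. The only nontrivial step is the algebraic factorization that makes the modulus $|z-3/8|$ constant; everything else is routine.
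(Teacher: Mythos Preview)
Your proof is correct and follows essentially the same route as the paper: both use the Chebyshev representation \eqref{3.14}, square the relation $v=(2z+1)/\sqrt{3z^2+5z}$ to obtain a quadratic in $z$, solve it, and verify that $|z-3/8|^2=49/64$. The only cosmetic differences are that the paper works with $u=v^2$ throughout and establishes $\operatorname{Re}(z)<1/2$ via monotonicity of the real part (computing a derivative), whereas you handle the inequality directly; your route there is in fact slightly cleaner.
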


\begin{proof}
Once again, we use the connection with Chebyshev polynomials given by 
\eqref{3.14}. With \eqref{5.1a} we have
\[
Q_{n+1}^{(3)}(z)=\left(3z^2+5z\right)^{n/2}\cdot
U_n\left(\tfrac{2z+1}{\sqrt{3z^2+5z}})\right).
\]
This means that $z$ is a zero of $Q_{n+1}^{(3)}(z)$ if and only if
\begin{equation}\label{5.3a}
v=\frac{2z+1}{\sqrt{3z^2+5z}}
\end{equation}
is a zero of $U_n(v)$. We know that $-1<v<1$, and with the aim of solving
\eqref{5.3a} for $z$, we get the quadratic equation
\[
(4-3u)z^2 + (4-5u)z + 1 = 0,\qquad 0\leq u<1,
\]
where for simplicity we have set $u=v^2$. The quadratic formula now gives the
solutions
\begin{equation}\label{5.4a}
z=-\frac{4-5u}{8-6u}\pm i\sqrt{\frac{28u-25u^2}{(8-6u)^2}},
\end{equation}
or equivalently,
\[
z-\frac{3}{8}=\frac{29u-28}{4(8-6u)}\pm i\sqrt{\frac{28u-25u^2}{(8-6u)^2}}.
\]
We note that the term inside the square root is always positive for
$0\leq u\leq 1$. Then it is easy to verify that 
\[
\left(\frac{29u-28}{4(8-6u)}\right)^2+\frac{28u-25u^2}{(8-6u)^2}
= \left(\frac{7}{8}\right)^2.
\]
This, with \eqref{5.4a}, shows that the zeros lie on the circle in question.

To prove the remaining statement, we let $r(u)$ be the real part on the right
of \eqref{5.4a}. Then $r(0)=-1/2$, $r(1)=1/2$, and since the derivative
$r'(u)=4/(4-3u)^2$ is always positive, $r(1)=1/2$ is in fact the supremum of
the real parts of all the zeros of $Q_{n+1}^{(3)}(z)$. This is due to the fact
that the zeros of $U_n(v)$ get arbitrarily close to 1 and $-1$ as $n$ grows, 
but $\pm 1$ are not zeros themselves. This completes the proof.
\end{proof}

\subsection{Further cases}
Without going into any detail, we now summarize a few additional cases that 
allow for some meaningful combinatorial interpretations. The polynomials in 
question are $Q_n(Z)$, as defined in \eqref{3.1}. We write them generically as
\[
Q_n(Z) = \sum_{k\geq 0}a_n(k)z^k.
\]
Furthermore, we refer to the colored base-3 partitions of $\frac{1}{2}(3^n-3)$,
restricted by \eqref{1.1}, simply as ``partitions". The methods used in 
Section~\ref{sec:4} and in the previous two subsection can then be applied to
the following cases.

\begin{enumerate}
\item[(1)] $Z=(z,z,1,1)$:
$a_n(k)$ counts the number of partitions that have $k$ marked parts. $Q_n(Z)$
is palindromic.
\item[(2)] $Z=(z,z,z,z)$:
$a_n(k)$ counts the number of partitions that have $k$ parts, but with each
pair of unmarked parts counted only once.
\item[(3)] $Z=(1,1,z,z^2)$:
$a_n(k)$ counts the number of partitions that have a total of $k$ unmarked
parts. $Q_n(Z)$ is palindromic and its zeros lie on the unit circle.
\item[(4)] $Z=(z,z,z,1)$: $a_n(k)$ counts the number of partitions whose total
number of parts, excluding pairs of unmarked parts, is $k$.
\item[(5)] $Z=(1,z,z,z^2)$: $a_n(k)$ counts the number of partitions that have
$k$ parts that don't have an overline. $Q_n(Z)$ is palindromic and its zeros 
lie on the unit circle or on a finite segment of the negative real axis. 
\item[(6)] $Z=(z,1,z,z^2)$: This is the same as in (5), only with ``overline" 
replaced by ``tilde".
\end{enumerate}

\section{Further Remarks}\label{sec:6}

\subsection{Some polynomial identities}
We begin this section with a few more properties of the polynomials 
introduced in \eqref{3.1}. They all connect the two polynomial sequences with
each other, but do not seem to have any obvious combinatorial interpretations.

\begin{proposition}\label{prop:6.1}
For all $n\geq 1$ we have
\begin{align}
wxz\cdot Q_n(Z) &= R_{n+1}(Z) - (w+x+y)\cdot R_n(Z),\label{6.1}\\
R_n(Z) &= Q_{n+1}(Z) -(wxy+wz+xz)\cdot Q_n(Z).\label{6.2}
\end{align}
\end{proposition}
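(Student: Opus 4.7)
The plan is to derive both identities directly from the recurrences in Proposition~\ref{prop:2.4}, by rewriting the indices $(3^{n+1}-1)/2$ and $(3^{n+1}-3)/2$ in the form $3m+r$ so that \eqref{2.4} or \eqref{2.5} applies. No induction is needed; everything reduces to the two index identities
\[
\frac{3^{n+1}-1}{2} = 3\cdot\frac{3^n-1}{2} + 1, \qquad
\frac{3^{n+1}-3}{2} = 3\cdot\frac{3^n-1}{2}, \qquad
\frac{3^n-1}{2} - 1 = \frac{3^n-3}{2}.
\]

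To get \eqref{6.1}, I would apply \eqref{2.5} with $n$ replaced by $m:=(3^n-1)/2$. Since $3m+1=(3^{n+1}-1)/2$ and $m-1=(3^n-3)/2$, this yields
\[
R_{n+1}(Z) = S(3m+1;Z) = (w+x+y)\cdot S(m;Z) + wxz\cdot S(m-1;Z)
= (w+x+y)\cdot R_n(Z) + wxz\cdot Q_n(Z),
\]
and solving for $wxz\cdot Q_n(Z)$ gives \eqref{6.1}.

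To get \eqref{6.2}, I would apply \eqref{2.4} with the same $m:=(3^n-1)/2$, noting that $3m=(3^{n+1}-3)/2$ and $m-1=(3^n-3)/2$, so that
\[
Q_{n+1}(Z) = S(3m;Z) = S(m;Z) + (wxy+wz+xz)\cdot S(m-1;Z)
= R_n(Z) + (wxy+wz+xz)\cdot Q_n(Z),
\]
and rearranging yields \eqref{6.2}.

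The only ``obstacle'' is purely bookkeeping: one must verify that the index shifts land correctly, and in particular that $m-1$ is nonnegative for $n\geq 1$ (indeed $m-1=(3^n-3)/2\geq 0$), so that the recurrences of Proposition~\ref{prop:2.4} are applicable. Once the indices are matched up, both identities drop out immediately from the definitions in \eqref{3.1}.
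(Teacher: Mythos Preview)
Your proof is correct and follows exactly the paper's approach: substitute $n\mapsto\tfrac{1}{2}(3^n-1)$ into \eqref{2.5} and \eqref{2.4} and read off \eqref{6.1} and \eqref{6.2} via the definitions \eqref{3.1}. You have in fact supplied more detail than the paper does, making the index arithmetic and the check $m\geq 1$ explicit.
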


\begin{proof}
Upon replacing $n$ by $\frac{1}{2}(3^n-1)$ in \eqref{2.5} we get 
\eqref{6.1} immediately from \eqref{3.1}. With the same substitution in 
\eqref{2.4}, we also get \eqref{6.2}.
\end{proof}

If we consider \eqref{6.1} and \eqref{6.2} as modified difference equations,
it becomes clear that in both cases we can use telescoping. In fact, without
too much effort one obtains the following identities.

\begin{corollary}\label{cor:6.2}
For all $N\geq 1$ we have
\begin{align}
R_N(Z) &= (w+x+y)^N+wxz\cdot \sum_{n=1}^N (w+x+y)^{N-n}Q_{n-1}(Z),\label{6.3}\\
Q_N(Z) &= \sum_{n=1}^{N}(wxy+wz+xz)^{N-n}R_{n-1}(Z).\label{6.4}
\end{align}
\end{corollary}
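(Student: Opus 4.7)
The plan is to treat each identity in Proposition~\ref{prop:6.1} as a first-order inhomogeneous recurrence in $n$ and sum it by telescoping. For \eqref{6.3}, I rewrite \eqref{6.1} in the form
\[
R_{n+1}(Z) = \alpha\,R_n(Z) + wxz\cdot Q_n(Z),
\qquad \alpha := w+x+y.
\]
Multiplying both sides by $\alpha^{N-n-1}$ produces
\[
\alpha^{N-n-1}R_{n+1}(Z) - \alpha^{N-n}R_n(Z) = wxz\cdot\alpha^{N-n-1}Q_n(Z).
\]
Summing over $n = 0, 1, \dots, N-1$, the left-hand side telescopes to $R_N(Z) - \alpha^N R_0(Z)$. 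Using $R_0(Z) = 1$ from Proposition~\ref{prop:3.1}(b), transposing $\alpha^N$, and reindexing the right-hand sum by $m = n+1$ yields \eqref{6.3} directly.

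For \eqref{6.4} the procedure is parallel. Rewriting \eqref{6.2} as
\[
Q_{n+1}(Z) = \beta\,Q_n(Z) + R_n(Z),
\qquad \beta := wxy+wz+xz,
\]
I multiply by $\beta^{N-n-1}$ and sum from $n=0$ to $N-1$. The telescoping left-hand side collapses to $Q_N(Z) - \beta^N Q_0(Z)$, which equals $Q_N(Z)$ because $Q_0(Z) = 0$ by Proposition~\ref{prop:3.1}(a). Reindexing the right-hand side via $m = n+1$ then gives \eqref{6.4}.

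There is essentially no obstacle. The only point worth noting is that $\alpha^{N-n-1}$ and $\beta^{N-n-1}$ are polynomials in $w,x,y,z$ rather than scalars, but multiplying a polynomial identity by a polynomial is harmless in $\mathbb{Z}[w,x,y,z]$, so the telescoping is rigorous. If one prefers to avoid any bookkeeping, the same conclusions follow by a one-line induction on $N$: the base case $N=1$ is just Proposition~\ref{prop:6.1} at $n=0$ combined with the initial values, and the inductive step applies \eqref{6.1} (respectively \eqref{6.2}) once more and absorbs the new term into the sum.
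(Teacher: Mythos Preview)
Your proof is correct and follows exactly the telescoping approach the paper indicates (the paper gives no details beyond the word ``telescoping''). The only tiny quibble is that Proposition~\ref{prop:6.1} is stated for $n\geq 1$ while your sum starts at $n=0$, but the $n=0$ instances of \eqref{6.1} and \eqref{6.2} are immediate from the initial values $Q_0(Z)=0$, $Q_1(Z)=1$, $R_0(Z)=1$, $R_1(Z)=w+x+y$, so this is harmless.
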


\subsection{Explicit zeros}
The Chebyshev polynomials $U_n(v)$ and $T_n(v)$ are known to have the zeros
\begin{equation}\label{6.5}
v_k=\cos\left(\tfrac{k+1}{n+1}\pi\right),\quad\hbox{resp.}\quad
v_k=\cos\left(\tfrac{2k+1}{2n}\pi\right),\quad k=0,1,\ldots,n-1;
\end{equation}
see, e.g., \cite[pp.~6--7]{Ri}. Substituting these values into \eqref{4.15}, 
we get explicit formulas for all the zeros of $Q_n^{(1)}(z)$ and $R_n^{(1)}(z)$.
Similarly, substituting the values of the first part of \eqref{6.5} into
\eqref{5.8} and \eqref{5.4a}, recalling that $u=v^2$ in this last case, we 
get explicit expressions also for the zeros of $Q_n^{(2)}(z)$ and 
$Q_n^{(3)}(z)$.

\subsection{Divisibility results}
Since the polynomials we have dealt with in this paper all have integer 
coefficients, it makes sense to ask about divisibility and irreducibility 
over the rationals. By \eqref{3.14}, divisibility properties of the 
Chebyshev polynomials $U_n(v)$ carry over to $Q_n(Z)$. In particular, since
$U_{n-1}(v)$ is a divisibility sequence (see, e.g., \cite[pp.~227 ff]{Ri}),
we have the following result.

\begin{corollary}\label{cor:6.3}
For $j=1,2,3$, the polynomial sequences $Q_n^{(j)}(z)$ are divisibility
sequences, that is, if $m|n$ then $Q_m^{(j)}(z)|Q_n^{(j)}(z)$. 
\end{corollary}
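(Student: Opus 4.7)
The plan is to transfer the known divisibility property of the Chebyshev polynomials $U_{n-1}(v)$ to the sequences $Q_n^{(j)}(z)$ by means of the identity \eqref{3.14}. Concretely, specializing \eqref{3.14} at $Z=Z_j$ gives
\[
Q_n^{(j)}(z) = \bigl(W_2^{(j)}(z)\bigr)^{(n-1)/2}\cdot U_{n-1}\!\left(\frac{W_1^{(j)}(z)}{2\bigl(W_2^{(j)}(z)\bigr)^{1/2}}\right),
\]
where $W_1^{(j)}$ and $W_2^{(j)}$ are the explicit polynomial coefficients computed in Sections~\ref{sec:4} and~\ref{sec:5}. Assuming $m\mid n$, it is known (and cited in the paper) that $U_{m-1}(v)\mid U_{n-1}(v)$ in $\mathbb{Z}[v]$; write the quotient as $P_{m,n}(v)$, a polynomial of degree $n-m$.

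The next step is to show that the formal quotient
\[
\frac{Q_n^{(j)}(z)}{Q_m^{(j)}(z)} = \bigl(W_2^{(j)}(z)\bigr)^{(n-m)/2}\cdot P_{m,n}\!\left(\frac{W_1^{(j)}(z)}{2\bigl(W_2^{(j)}(z)\bigr)^{1/2}}\right)
\]
is actually a polynomial in $z$, despite the apparent half-integer exponent and square root. The key observation is a parity fact: since $U_k(-v)=(-1)^k U_k(v)$, the quotient $P_{m,n}(v)$ contains only monomials $v^j$ with $j\equiv n-m\pmod 2$. Substituting such $v^j$ produces
\[
\bigl(W_2^{(j)}(z)\bigr)^{(n-m)/2}\cdot\frac{\bigl(W_1^{(j)}(z)\bigr)^j}{2^j\bigl(W_2^{(j)}(z)\bigr)^{j/2}} = \frac{\bigl(W_1^{(j)}(z)\bigr)^j\bigl(W_2^{(j)}(z)\bigr)^{(n-m-j)/2}}{2^j},
\]
with $(n-m-j)/2$ a nonnegative integer. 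Thus $Q_n^{(j)}(z)/Q_m^{(j)}(z)$ is a polynomial in $W_1^{(j)}(z)$ and $W_2^{(j)}(z)$, and hence a polynomial in $z$ (with rational coefficients); since $Q_n^{(j)}(z)\in\mathbb{Z}[z]$ and the leading coefficient of $Q_m^{(j)}(z)$ is easily tracked, Gauss's lemma promotes the divisibility to $\mathbb{Z}[z]$ if desired.

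I expect the only real obstacle to be the parity bookkeeping that makes the square root disappear. Everything else—the divisibility statement for the Chebyshev $U$ sequence and the explicit form of $W_1^{(j)},W_2^{(j)}$—is either standard or already recorded in the paper, so after the parity argument the result follows immediately, and one should observe that this proof works verbatim for any specialization $Q_n(Z)\vert_{Z=Z'}$ in which $W_1(Z'),W_2(Z')$ are polynomials in the specialization parameter.
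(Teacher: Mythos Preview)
Your proof is correct and follows exactly the route the paper takes: transfer the divisibility property of the Chebyshev sequence $U_{n-1}(v)$ to $Q_n^{(j)}(z)$ via the identity \eqref{3.14}, citing the standard fact (from Rivlin) that $U_{n-1}$ is a divisibility sequence. The paper simply asserts that divisibility carries over through \eqref{3.14}; your parity argument showing that the half-integer powers of $W_2^{(j)}$ cancel is precisely the detail the paper leaves implicit.
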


\noindent
{\bf Example~7.} The polynomial $Q_6^{(3)}(z)$ in Table~4 factors as
$2(2z+1)(7z^2+z+4)(13z^2+11z+4)$, and we see that $Q_2^{(3)}(z)$ and
$Q_3^{(3)}(z)$ are among the factors.

\bigskip
In general, the situation is less straightforward for the polynomials $R_n(Z)$,
but for $R_n^{(1)}(z)$ the additional term in \eqref{3.15} disappears. The
identity \eqref{4.5} then shows that divisibility properties of $T_n(v)$ carry
over to $R_n^{(1)}(z)$. However, in contrast to $U_n(v)$, the polynomials
$T_n(v)$ are only a partial divisibility sequence. For instance,
$R_6^{(1)}(z)=(z^2+4z+5)(z^4+8z^3+38z^2+88z+73)$, and we see from Table~2 that
$R_2^{(1)}(z)$ divides $R_6^{(1)}(z)$, but $R_1^{(1)}(z)$ and $R_3^{(1)}(z)$ 
do not.

\medskip
Finally, since compositions of functions are involved in \eqref{3.14}, we 
cannot directly conclude that known irreducibility results for Chebyshev 
polynomials will carry 
over to our $Q$- and $R$-polynomials. For some related approaches and results,
see Proposition~6.8 in \cite{DE11}.

\end{document}